\theoremstyle{plain}
\newtheorem{thm}{Theorem}[section]
\newtheorem{prop}[thm]{Proposition}
\newtheorem{lemma}[thm]{Lemma}
\theoremstyle{definition}
\newtheorem{defi}[thm]{Definition}
\theoremstyle{remark}
\newtheorem{remark}[thm]{Remark}
\newtheorem{ep}[thm]{Example}
\newcommand{\ZZ}{\ensuremath{\mathbb Z}}
\newcommand{\RR}{\ensuremath{\mathbb R}}
\newcommand{\g}{\ensuremath{\mathfrak{g}}}
\newcommand{\h}{\ensuremath{\mathfrak{h}}}
\newcommand{\li}{\ensuremath{L_{\infty}}}
\definecolor{forest}{rgb}{0,0.5,0} 
\newcommand{\dw}{\ensuremath{d_{tot}}}
\newcommand{\vs}{\varsigma}
\newcommand{\ham}[2]{\Omega^{#1}_{\mathrm{Ham}}\left(#2\right)}
\newcommand{\alphak}[1]{\alpha_{1} \otimes \cdots \otimes \alpha_{#1}}
\newcommand{\alphadk}[1]{\alpha_{1},\hdots,\alpha_{#1}}
\newcommand{\vk}[1]{v_{\alpha_{1}} \wedge \cdots \wedge  v_{\alpha_{#1}}}
\newcommand{\prim}{\varphi}
\newcommand{\tensor}{\otimes}
\newcommand{\maps}{\colon}
\begin{document}

\title{Products of multisymplectic manifolds\\ and homotopy moment maps}
  
  \author{C. S. Shahbazi}
\email{carlos.shabazi@cea.fr}
\address{Institut de Physique Théorique, CEA Saclay France.}

\author{M. Zambon}
\email{marco.zambon@wis.kuleuven.be}
\address{KU Leuven, Department of Mathematics, Celestijnenlaan 200B box 2400, BE-3001 Leuven, Belgium.}


\begin{abstract}

Multisymplectic geometry admits an operation that has no counterpart in symplectic geometry, namely, taking the product of two multisymplectic manifolds endowed with the \emph{wedge product} of the multisymplectic forms. We show that there is an $L_{\infty}$-embedding of the  $L_{\infty}$-algebra of observables of the individual factors into the observables of the product, and that homotopy moment maps for the individual factors induce a homotopy moment map for the product. As a by-product, we associate to every multisymplectic form  a curved $L_{\infty}$-algebra, whose curvature is the multisymplectic form itself.
  \end{abstract}

\maketitle

\setcounter{tocdepth}{1} 
\tableofcontents


\section*{Introduction}

    
Multisymplectic forms are higher analogues of symplectic forms. More precisely, we will refer to closed non-degenerate $(n+1)$-forms as $n$-plectic forms, so that for $n=1$ {we recover the definition of a symplectic form}. Although multisymplectic forms have been studied for a long time, in part due to the role they play in field theory, it was only around 2010 that the algebraic structure underlying them was unveiled: in \cite{BHR}\cite{RogersL} it was realized  that the ``observables'' on a multisymplectic manifold carry the structure of an $L_{\infty}$-algebra, which in the symplectic case reduces to the Poisson algebra of functions. Recall that an $L_{\infty}$-algebra is the notion that one obtains from a Lie algebra when one requires the Jacobi identity {to be satisfied only up to a higher coherent chain homotopy}. Given an $n$-plectic manifold $(M,\omega)$, we denote by $L_{\infty}(M,\omega)$ its associated $L_{\infty}$-algebra.

Given an action of a Lie group on a multisymplectic manifold  $(M,\omega)$, homotopy moment maps were introduced in \cite{FRZ} making use of $L_{\infty}(M,\omega)$ in an essential way. Homotopy moment maps enjoy nice properties:  cocycles in equivariant cohomology give rise to  homotopy moment maps, and  the latter are well-behaved w.r.t loop space constructions, as shown in \cite{FRZ}. In the setting of (higher) Hamiltonian systems, one can show  that homotopy moment maps induce conserved quantities \cite{Cons}. In the setting of (higher) prequantization,   homotopy moment maps can {be} lifted to higher prequantum bundles \cite{higherpreq}.
  
One feature of  multisymplectic geometry, {first explored in \cite{ShahbaziMaster}}, is that it admits a natural operation which has no counterpart in symplectic geometry, namely the wedge product: let $(M_a, \omega_a)$ be a $n_a$-plectic manifold, and similarly let $(M_b, \omega_b)$ be a $n_b$-plectic manifold. Then 

\begin{equation}
(\tilde{M},\tilde{\omega}):= (M_{a}\times M_{b}, \omega_{a}\wedge\omega_{b})
\end{equation}

\noindent
is also a multisymplectic manifold, since $\omega$ is a non-degenerate $(n_a+n_b+2)$-form. Notice that while this structure is natural and always well-defined, the structure on $\tilde{M}$  that is familiar from symplectic geometry -- namely the sum $\omega_{a}+\omega_{b}$ --  is of little use since it is not a form of well-defined degree except in the case $n_a=n_b$.\\

The main goal of this letter is to show that both the $L_{\infty}$-algebra of observables and  homotopy moment maps are well-behaved with respect to the above wedge product operation in multisymplectic geometry. Actually, all our results are proven in the more general setting of closed forms, in which the non-degeneracy assumption is dropped.

More precisely, assuming that a Lie group $G_{C}$, with Lie algebra $\mathfrak{g}_{C}$,  acts on $\left( M_{C}, \omega_{C}\right)$  with   homotopy moment map $f^{C}:\mathfrak{g}_{C}\to L_{\infty}\left(M_{C}, \omega_{C}\right)$, for $C=a\, ,b$:

\begin{enumerate}

\item We construct a homotopy moment map $$F: \mathfrak{g}_{a}\oplus\mathfrak{g}_{b}\to L_{\infty}(\tilde{M},\tilde{\omega})$$ for the product manifold $\left(\tilde{M},\tilde{\omega}\right)$, out of the homotopy moment maps $f^{C}$
for the individual factors.
 
\item We construct an $L_{\infty}$-embedding $$H : L_{\infty}(M_{a},\omega_{a})\oplus L_{\infty}(M_{b},\omega_{b}) \to L_{\infty}(\tilde{M},\tilde{\omega})$$ from the direct sum of the $L_{\infty}$-algebras of the factors, to the $L_{\infty}$-algebra of the product manifold.

\end{enumerate}

\noindent 
We will see that the two questions addressed above are closely related. Indeed, rather than approaching directly question (2), we  first construct $F$ as in question (1), and using its explicit formula we are able to make an educated guess for $H$ as in question (2) so that the following diagram of $L_{\infty}$-morphisms commutes: 

\begin{center}
\begin{equation}
\begin{tikzpicture}[baseline=(current bounding box.center)]
\label{diag:productmoment}
  \matrix (m) [matrix of math nodes,row sep=8em,column sep=9em,minimum width=2em]
  {
L_{\infty}\left( M_{a},\omega_{a}\right)\oplus L_{\infty}\left( M_{b},\omega_{b}\right) & L_{\infty}\left( \tilde{M}, \tilde{\omega}\right) \\
\mathfrak{g}_{a}\oplus\mathfrak{g}_{b}  \\};
  \path[-stealth]
    (m-2-1) edge node [left] {$f^{a}\oplus f^b$} 
    (m-1-1)
    (m-1-1) edge node [above] {$\mathrm{H}$} (m-1-2)
    (m-2-1) edge node [below] {$F$} (m-1-2);
\end{tikzpicture}
\end{equation}
\end{center}

We construct the homotopy moment map $F$ out of $f^a$ and $f^b$ in \S \ref{sec:Momentmapproduct} (see Thm. \ref{thm:morphismproduct}), making use of the machinery developed in \cite{FLRZ}\cite{WurzRyvkinMomaps}, and we compare our construction with the one given by \cite{FRZ} for homotopy moment maps arising from equivariant cocycles.  
Then in \S \ref{section:iterated} we specialize to the case of iterated powers of the same multisymplectic form, i.e. $(M,\omega^m)$, displaying explicit formulae for the case   $(M,\omega^2)$ and discussing Hyperk\"ahler manifolds as an example. 
In \S \ref{section:emb}  we construct the $\li$-embedding $H$ (by $\li$-embedding we mean an $\li$-morphism whose first component $H_1$ is injective). We do this in Thm. \ref{thm:embed}, using the formulae for $F$  as a guide. 

Finally in \S \ref{section:curved} we present an interesting by-product of this note, namely, the existence of a \emph{curved} $L_{\infty}$-algebra that is naturally associated to every multisymplectic manifold, and whose ``curvature'' is the multisymplectic form. Being a genuinely   {curved} $L_{\infty}$-algebra, it differs from the  $L_{\infty}$-algebra of observables $L_{\infty}(M,\omega)$ introduced in \cite{BHR}\cite{RogersL}.
{The underlying graded vector spaces are the same in degrees $\le 0$, but the one of the  {curved} $L_{\infty}$-algebra also has non-trivial components in degrees $1$ and $2$, while $L_{\infty}(M,\omega)$  is trivial in those degrees.} \\

  
\noindent{\bf Acknowledgements:} We thank Chris Rogers for comments on a preliminary version of Section 5, and Martin Callies for sharing with us a draft of \cite{Callies}.
C.S. was partially supported by the ERC Starting Grant 259133 -- ObservableString. M.Z. was partially supported by grants  
MTM2011-22612 and ICMAT Severo Ochoa  SEV-2011-0087 (Spain), Pesquisador Visitante Especial grant  88881.030367/2013-01 (CAPES/Brazil) and  IAP Dygest (Belgium).


\section{Background on homotopy moment maps}

In this section we  briefly review the geometry of closed differential forms and the notion of homotopy moment map, which will be used through the rest of this note, following \cite{RogersThesis,FRZ}.  We will call  $(M,\omega)$ a {\bf pre-$n$-plectic} manifold if $M$ is a  manifold and $\omega$ a closed $(n+1)$-form. 

 
\subsection{Closed forms on manifolds and $\li$-algebras}


\begin{defi} \label{Hamiltonian}
Let $(M,\omega)$ be a pre-$n$-plectic manifold. A $(n-1)$-form $\alpha$ is said to be {\bf Hamiltonian} if and only if there exists a vector field $v_\alpha \in \mathfrak{X}(M)$ such that

\begin{equation*}
\label{eq:hamcondition}
d \alpha= -\iota_{v_\alpha} \omega\, .
\end{equation*}

\noindent
We say then that $v_\alpha$ is a {\bf Hamiltonian vector field} for $\alpha$. The sets of Hamiltonian $(n-1)$-forms and Hamiltonian vector fields are respectively denoted by $\ham{n-1}{M}$ and $\mathfrak{X}_{\mathrm{Ham}}(M)$.
\end{defi}
A pre-$n$-plectic manifold $(M,\omega)$ is said to be {\bf{$n$-plectic}} if for every  $ u \in TM$, the following non-degeneracy condition is satisfied: $\iota_u \omega = 0 $ implies $ u=0$. In other words, $\omega$ is injective when seen as a bundle map $ TM\to  \wedge^n T^{\ast} M $. Notice that if $(M,\omega)$ is $n$-plectic, then for each Hamiltonian form $\alpha\in\ham{n-1}{M}$ there exists a unique Hamiltonian vector field $v_{\alpha}\in \mathfrak{X}_{\mathrm{Ham}}(M)$.
Further, a $1$-plectic manifold is the same thing as a symplectic manifold.
\begin{defi} 
\label{def:bracket}
Let $(M,\omega)$ be a pre-$n$-plectic manifold. We define the bilinear bracket $\left\{\cdot , \cdot\right\}_{2} : \Omega^{n-1}_{\mathrm{Ham}}(M)\times \Omega^{n-1}_{\mathrm{Ham}}(M)\to \Omega^{n-1}_{\mathrm{Ham}}(M)$ as follows
\begin{equation*}
\left\{\alpha, \beta\right\}_{2} = \iota_{v_{\beta}}\iota_{v_{\alpha}}\omega\, , \qquad \alpha , \beta \in \Omega^{n-1}_{\mathrm{Ham}}(M)\, ,
\end{equation*}
where $v_{\alpha}$ and $v_{\beta}$ are any Hamiltonian vector fields for $\alpha$ and $\beta$ respectively. 
\end{defi}
The bracket of two Hamiltonian forms is Hamiltonian, and it is well defined since it does not depends on the choice of Hamiltonian vector field among those which are associated with the given Hamiltonian forms. Although the bracket is skew-symmetric, it fails to satisfy the Jacobi identity (the failure is given by an exact form), and therefore it does not make the vector space of Hamiltonian forms into a Lie algebra. Of course one could consider the  induced graded Lie bracket on the quotient of $\Omega^{n-1}_{\mathrm{Ham}}(M)$ by the exact forms or by the closed forms\footnote{The latter quotient is isomorphic to $\mathfrak{X}_{\mathrm{Ham}}(M)$ as a graded Lie algebra.}, but doing so one loses a lot of information.

In \cite[Thm. 5.2]{RogersL}, Rogers associated to any $n$-plectic manifold an $L_{\infty}$-algebra, depending exclusively on $\omega$ and the de Rham differential $d$. This was generalized  to pre-$n$-plectic manifolds in \cite[Thm. 6.7]{HDirac}. Let us first recall the general definition of $L_{\infty}$-algebra.

\begin{defi}[\cite{Lada-Markl}] 
\label{def:Linfinity} 
An {\bf $L_{\infty}$-algebra} is a graded vector space $L$ equipped with a collection $\left\{ l_{k} \maps L^{\tensor k} \to L \,\, | \,\, 1 \leq k < \infty \right\}$  of graded skew-symmetric linear maps with  $\deg{l_{k}}=2-k$, such that the following identity holds for $m\ge 1$ and homogeneous elements $x_1,\dots,x_m\in L$:
\begin{eqnarray*} 
   \sum_{\substack{i+j = m+1, \\ \sigma \in Sh_{i,m-i}}}
  (-1)^{\sigma}\epsilon(\sigma)(-1)^{i(j-1)} l_{j}
   (l_{i}(x_{\sigma(1)}, \dots, x_{\sigma(i)}), x_{\sigma(i+1)}\, ,
   \ldots, x_{\sigma(m)})=0\, .
\end{eqnarray*}
Here $Sh_{i,m-i}$ denotes the $(i,m-i)$-unshuffles, 
 i.e.
permutations $\sigma$ of $\{1,\dots,m\}$ such that $\sigma(1)<\dots<\sigma(i)$ and  $\sigma(i+1)<\dots<\sigma(m)$, while
 $\epsilon(\sigma)$ is the Koszul\footnote{The Koszul sign depends on $x_1,\dots,x_m$ too. For instance, if $\sigma$ is the transposition of $x_1$ and $x_2$, then
the Koszul sign is $(-1)^{|x_1|\cdot|x_2|}$.}  sign.
\end{defi}

\noindent
The definition of $L_{\infty}$-algebra may seem  somehow arbitrary, however it admits a conceptual and elegant formulation in terms of a coalgebra equipped with a codifferential \cite{Lada-Markl,FRZ}, which we will not need in this note. We will be interested in a particular class of $L_{\infty}$-algebras:

\begin{defi}
A {\bf Lie $n$-algebra} is an $L_{\infty}$-algebra $\left( L, l_{k}\right)$ such that the graded vector space $L$ is concentrated in degrees $-n+1,\dots,-1,0$.
\end{defi}
\noindent For Lie $n$-algebras\footnote{{Lie $n$-algebras should not be confused with 
Filippov's notion of $n$-Lie algebra, in which the structure is given by a single map of arity $n$.}},  by degree counting we have $l_{k} = 0$ for $k>n+1$. For $n=1$ we recover the definition of an ordinary Lie algebra. The $L_{\infty}$-algebra constructed in references \cite{RogersL,HDirac} starting from a pre-$n$-plectic or $n$-plectic manifold is indeed a particular instance of Lie $n$-algebra. The construction is the following.

\begin{defi}
\label{def:multisymLinfinity}  
Let  $(M,\omega)$ be a pre-$n$-plectic manifold. There is a Lie $n$-algebra structure
$\mathbf{L_{\infty}(M,\omega)} = \left( L,\{l_{k} \}_{k \geq 1}\right)$  on the graded vector space $L$ whose non-trivial components are
\[
L_{i} =
\begin{cases}
\ham{n-1}{M} & \hbox{ for } i=0,\\
\Omega^{n-1+i}(M) & \hbox{ for }   1-n \leq i \leq -1.
\end{cases}
\]
The Lie $n$-algebra structure is given by 
  the sequence of maps $\{l_{k} \}_{k \geq 1}$ defined by
\[ 
l_{1}(\alpha)=
\begin{cases}
d\alpha & \text{if $\deg{\alpha} \leq -1$}, \\
0  & \text{if
  $\deg{\alpha}=0$}, 
  \end{cases}
\]
and for all $k\geq 2$ by
\[
l_{k}(\alphadk{k}) =
\begin{cases}
0 & \text{if $\deg{\alphak{k}} \leq -1$}, \\
\vs(k) \iota(\vk{k}) \omega  & \text{if
  $\deg{\alphak{k}}=0$}.
  \end{cases}
\]
Above, $v_{\alpha_{i}}$ is any Hamiltonian vector field associated to $\alpha_{i} \in \ham{n-1}{M}$, and we define  $\varsigma(k):=-(-1)^{k(k+1)/2}$ (so $\varsigma(k)=1,1,-1-1,1,\dots$ for $k=1,2,3,4,5,\dots$).
\end{defi}
\noindent Notice that $l_{2}\left(\cdot , \cdot\right) = \left\{\cdot , \cdot\right\}_{2}$, so the $L_{\infty}$-algebra constructed above extends the bilinear bracket of Def. \ref{def:bracket}. We will often write $\{\dots\}_k$ instead of $l_k\, , k\geq 1$. 

We introduce a further sequence of operations on $L$, which turns out to be very handy for the purposes of this note.

\begin{remark}
\label{prop:iotaomega} 
{The operations $[\dots]_k$ on $L$ we introduce now} are labelled by integers $k\ge 0$, unlike the operations introduced in Def. \ref{def:multisymLinfinity}. The multilinear maps $[\dots]_{k}$ are closely related to the multibrackets of $L_{\infty}(M,\omega)$: for $k\ge 1$,

\begin{equation*}
[\alphadk{k}]_k=\{\alphadk{k}\}_k-\delta_{k,1}d\alpha_1\,,
\end{equation*}  

\noindent
where $\delta$ denotes the Kronecker delta. In particular, for $k\ge 2$,   $[\dots]_k$ and $\{\dots\}_k$ agree, {while $[\alpha]_1$ vanishes if $\deg{\alpha} <0$ and equals $-d\alpha$ when $\deg{\alpha} =0$. We also have  $[1]_0=-\omega$.}
 In Prop. \ref{prop:curved} we will see that the $[\dots]_k$ extend to a curved $L_{\infty}$-algebra structure.
\end{remark}
 {Explicitly,  the operations $[\dots]_k$ are given as follows:}

\begin{defi}\label{def:square}  
Let $(M,\omega)$ be a pre-$n$-plectic manifold. Let $L$ denote the graded vector space underlying $L_{\infty}(M,\omega)$.

For all $k\ge 0$, we define the multilinear maps ${[\dots]_k}\colon L^{\otimes k}\to\Omega^{n+1-k}(M)$ as follows:
\[
[\alphadk{k}]_k =
\begin{cases}
0 & \text{if $\deg{\alphak{k}} \leq -1$}, \\
\vs(k) \iota(\vk{k}) \omega  & \text{if
  $\deg{\alphak{k}}=0$},
  \end{cases}
\]

 \end{defi}


\subsection{Homotopy moment maps and group actions}\label{momap}

 
Let $(M,\omega)$ be a pre-$n$-plectic manifold and let $G$ be a Lie group, with corresponding Lie algebra $\mathfrak{g}$, that acts on $(M,\omega)$ preserving $\omega $. The Lie group $G$ acts on $\Omega^{\bullet}(M)$ from the left via $g\cdot \omega\mapsto (\psi_{g^{-1}})^{\ast} \omega$, where $\psi_{g}$ is the diffeomorphism associated to $g$. The corresponding infinitesimal action is a Lie-algebra homomorphism from the Lie algebra $\mathfrak{g}$ to the vector fields $\mathfrak{X}(M)$ on $M$, namely:

\begin{equation*}
v_{-}:\mathfrak{g}\to\mathfrak{X}(M)\, ,\quad x\mapsto v_{x}\, ,
\end{equation*}

\noindent
where\footnote{The notation we chose for the vector field $v_x$ (associated to $x\in \g$ by the infinitesimal action) is similar to the one chosen for   Hamiltonian vector fields $v_{\alpha}$ of a Hamiltonian form $\alpha$ (Def. \ref{Hamiltonian}). We hope this does not give rise to confusion.}
\begin{equation*}
v_{x}|_{p} = \frac{d}{dt}exp(-tx)\cdot p|_{t=0}\, , \quad \forall p \in M\, .
\end{equation*}

\noindent
We present now the concept of homotopy moment map, introduced in  \cite{FRZ}, which generalizes the comoment map construction that appears in symplectic geometry.

\begin{defi}
\label{def:homotopymoment}
A {\bf homotopy moment map} for the action of $G$ on $(M,\omega)$ is an $\li$-morphism $f\colon \g \to L_{\infty}(M,\omega)$ such that for all $x\in \g$
\begin{equation}
\label{eq:mom}
d f_1(x)=-\iota_{v_x}\omega.
\end{equation}

\noindent
An action is said to be {\bf Hamiltonian} if it admits a homotopy moment map.
\end{defi}

\begin{remark}
a) From equation \eqref{eq:mom}, we see that a necessary (but not sufficient) condition for an action of $G$ to be Hamiltonian is that, infinitesimally, it acts through Hamiltonian vector fields. Notice that $f$ is not required to satisfy any equivariance  properties.

b) Def. \ref{def:homotopymoment} is a generalization of the notion of comoment map  for the action of a Lie group on a symplectic manifold. Indeed, for $n=1$ we recover the standard definition of a comoment map as a Lie-algebra homomorphism from the Lie algebra $\mathfrak{g}$ to the Poisson algebra of functions on the symplectic manifold. 
\end{remark}

\noindent
A homotopy moment map is   a particular instance of $L_{\infty}$-morphism, and the latter is  a fairly complicated object to handle in general. Luckily enough, we only need to consider $L_{\infty}$-morphisms having as source a Lie algebra, and as target a Lie $n$-algebra with the property that its  higher brackets are non-trivial only in degree zero (this is Property (P) in \cite[\S 3.2]{FRZ}). By  \cite[Prop. 3.8]{FRZ} (see also  the text at the beginning of Section 5 
there), $f\colon \g \to L_{\infty}(M,\omega)$  being a $\li$-morphism means that it consists of components  $f_k\colon \wedge^k \g\to \Omega^{n-k}(M)$ (for $k=1,\dots,n$) satisfying:

\begin{multline} \label{main_eq_1}
\sum_{1 \leq i < j \leq k}
(-1)^{i+j+1}f_{k-1}([x_{i},x_{j}],x_{1},\ldots,\widehat{x_{i}},\ldots,\widehat{x_{j}},\ldots,x_{k})\\
=df_{k}(x_{1},\ldots,x_{k}) + \vs(k)\iota(v_{x_1}\wedge \cdots \wedge v_{x_k})\omega
\end{multline}
 for $2 \leq k \leq n$, as well as  
\begin{multline} \label{main_eq_2}
\sum_{1 \leq i < j \leq n+1}
(-1)^{i+j+1}f_{n}([x_{i},x_{j}],x_{1},\ldots,\widehat{x_{i}},\ldots,\widehat{x_{j}},\ldots,x_{n+1})
=\vs(n+1)\iota(v_{x_1}\wedge \cdots \wedge v_{x_{n+1}})\omega.
\end{multline}

\noindent 
{Notice that the right-most   term   of eq. \eqref{main_eq_1} 
is just $l_k(f_1(x_1),\dots,f_1(x_k))$, and similarly for \eqref{main_eq_2}.}
 As mentioned above, comoment maps for symplectic manifolds are particular cases of homotopy moment maps. Further examples of homotopy moment maps can be found in  \cite{FRZ} and \cite{WurzRyvkinMomaps}.


\section{Homotopy moment maps for cartesian products $(M_{a}\times M_{b},\omega_{a}\wedge\omega_{b})$} 
\label{sec:Momentmapproduct}


Let $\left( M_{C}, \omega_{C}\right)\, , C=a\, ,b\, ,$ be   a pre-$n$-plectic manifold and let $G_{C}$ be a Lie group, with Lie algebra $\mathfrak{g}_{C}$, which acts on $\left( M_{C}, \omega_{C}\right)$ in a Hamiltonian way, with corresponding homotopy moment map $f^{C}:\mathfrak{g}_{C}\to L_{\infty}\left(M_{C}, \omega_{C}\right)$. 
Then $G\equiv G_{a}\times G_{b}$ acts on  the  pre-$(n_a+n_b+1)$-plectic manifold\footnote{We will slightly abuse the notation, denoting a differential form on $M_C$ and its pullback to $M_a\times M_b$, via  the canonical projection, by the same symbol.}

\begin{equation*}
\left( M\equiv M_a\times M_b\;,\;\omega\equiv\omega_a\wedge\omega_b\right)\, .
\end{equation*}

\noindent  The main theorem of this section is Theorem \ref{thm:morphismproduct}, where from the above data we explicitly construct a homotopy moment map $F: \mathfrak{g}_{a}\otimes\mathfrak{g}_{b}\to L_{\infty}(M,\omega)$.  

\subsection{The construction of $F$}

 We first recall a few facts from \cite[\S 2]{FLRZ} \cite{WurzRyvkinMomaps}. Let $(M,\omega)$ be a   pre-$n$-plectic manifold, and $G$ a Lie group acting on $M$ preserving $\omega$. The manifold $M$ and the Lie algebra $\g$ give rise to a  double complex

\begin{equation*}
K:=(\wedge^{\ge1} \g^*\otimes \Omega(M), d_\g,d)\, ,
\end{equation*}

\noindent
where $d_\g$ is the Chevallier-Eilenberg differential of $\g$ and $d$ is the de Rham differential of $M$. We consider the total complex with differential $$\dw:=d_\g\otimes 1+1\otimes d.$$ Hence,  on an element of $\wedge^k \g^*\otimes \Omega(M)$, $\dw$ acts as $d_\g + (-1)^kd$.

For any $G$-invariant $\sigma\in \Omega^N(M)$ define 

\begin{equation*}
\label{eq:omegak}
{\sigma}^k \colon \wedge^k\g \to \Omega^{N-k}(M),\;\; (x_1,\dots,x_k)\mapsto \iota{(v_1\wedge\dots\wedge v_k)}\sigma\, ,
\end{equation*}
and 
\begin{equation}\label{eq:tild}
\tilde{\sigma}:=\sum_{k=1}^{N}(-1)^{k-1}{\sigma}^k.
\end{equation}
 Since each ${\sigma}^k$ can be viewed as an element of $\wedge^k\g^* \otimes \Omega^{N-k}(M)$, it follows that $\sigma$ can be viewed as an element of $K$ of total degree $N$. It turns out that $\tilde{\omega}$ is $\dw$-closed, as a consequence of the fact that $\omega$ is a closed form. The link to homotopy moment maps is given by \cite[Prop. 2.5]{FLRZ}, which we reproduce for the reader's convenience:
 
\begin{prop} \label{prop:doubleprimitive}
Let $\prim=\prim_1+\dots+\prim_n$, with $\prim_k \in \wedge^k \g^*\otimes \Omega^{n-k}(M)$. Then: $\dw\prim=\widetilde{\omega}$ if{f} $$f_k:=\varsigma(k)\prim_k \colon \wedge^k\g \to \Omega^{n-k}(M),$$ for $k=1,\dots,n$, are the components of a homotopy moment map for the action of $G$ on $(M,\omega)$.
\end{prop}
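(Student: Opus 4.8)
The plan is to prove Proposition~\ref{prop:doubleprimitive} by a direct bookkeeping argument that matches, degree by degree in $\wedge^\bullet\g^*$, the equation $\dw\prim=\widetilde\omega$ against the defining equations \eqref{main_eq_1}--\eqref{main_eq_2} of a homotopy moment map. First I would unwind the total differential: writing $\prim=\sum_{k=1}^n\prim_k$ with $\prim_k\in\wedge^k\g^*\otimes\Omega^{n-k}(M)$, and recalling that $\dw$ acts on $\wedge^k\g^*\otimes\Omega(M)$ as $d_\g+(-1)^k d$, I get
\[
\dw\prim=\sum_{k=1}^{n}\Bigl(d_\g\prim_k+(-1)^k d\,\prim_k\Bigr),
\]
where $d_\g\prim_k\in\wedge^{k+1}\g^*\otimes\Omega^{n-k}(M)$ and $(-1)^k d\prim_k\in\wedge^k\g^*\otimes\Omega^{n-k+1}(M)$. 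Collecting the terms landing in $\wedge^k\g^*\otimes\Omega^{n-k+1}(M)$, the component of $\dw\prim$ there is $d_\g\prim_{k-1}+(-1)^k d\prim_k$ (with the convention $\prim_0=0$ and $\prim_{n+1}=0$). On the other side, $\widetilde\omega=\sum_{k\ge1}(-1)^{k-1}\omega^k$ with $\omega^k=\iota(v_1\wedge\cdots\wedge v_k)\omega\in\wedge^k\g^*\otimes\Omega^{n-k+1}(M)$ when $\omega$ has degree $n+1$; since $\omega$ is non-degeneracy-free here we just keep all $k=1,\dots,n+1$. So the equation $\dw\prim=\widetilde\omega$ is equivalent to the family of equations
\[
d_\g\prim_{k-1}+(-1)^k d\prim_k=(-1)^{k-1}\omega^k\, ,\qquad k=1,\dots,n+1,
\]
evaluated on $(x_1,\dots,x_k)\in\wedge^k\g$.

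The second step is to spell out $d_\g\prim_{k-1}$ explicitly on generators. Since $v_{-}\colon\g\to\mathfrak X(M)$ is a Lie algebra homomorphism and the $\g$-action is by symmetries (so that $L_{v_x}$ commutes with contractions appropriately), the Chevalley--Eilenberg differential of the $\wedge^{k-1}\g^*$-valued form $\prim_{k-1}$ evaluated on $x_1,\dots,x_k$ produces exactly the alternating sum $\sum_{i<j}(-1)^{i+j+1}\prim_{k-1}([x_i,x_j],x_1,\dots,\widehat{x_i},\dots,\widehat{x_j},\dots,x_k)$ (there is no "action" term because $\prim_{k-1}$ takes values in differential forms, not in a trivial module — one must be slightly careful here, but for the purposes of Proposition~\ref{prop:doubleprimitive} the relevant $d_\g$ is the one used to build $K$, which by construction has this combinatorial form). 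Then I substitute $\prim_k=\varsigma(k)^{-1}f_k=\varsigma(k)f_k$ (using $\varsigma(k)^2=1$), and likewise $\prim_{k-1}=\varsigma(k-1)f_{k-1}$, and $\omega^k=\iota(v_{x_1}\wedge\cdots\wedge v_{x_k})\omega$. The equation for a given $k$ becomes, after dividing through by an appropriate sign,
\[
\varsigma(k-1)\sum_{i<j}(-1)^{i+j+1}f_{k-1}([x_i,x_j],\dots)
=(-1)^k\varsigma(k)\,d f_k(x_1,\dots,x_k)+(-1)^{k-1}\iota(v_{x_1}\wedge\cdots\wedge v_{x_k})\omega\, .
\]
The final step is purely a sign check: one verifies that $\varsigma(k-1)$, $(-1)^k\varsigma(k)$, and $(-1)^{k-1}$ are related exactly so that this identity is \eqref{main_eq_1} for $2\le k\le n$ and \eqref{main_eq_2} for $k=n+1$ (in the latter case $\prim_{n+1}=0$ kills the $df$-term), while for $k=1$ it reduces to $df_1(x)=-\iota_{v_x}\omega$, which is the moment-map condition \eqref{eq:mom}. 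Using $\varsigma(k)=-(-1)^{k(k+1)/2}$ one checks $\varsigma(k-1)=(-1)^{k}\varsigma(k)\cdot(-1)^{?}$ etc.; this is the one place where an error is easy to make.

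I expect the main obstacle to be precisely this sign reconciliation: there are three independent sources of signs — the Koszul/$(-1)^k$ factor in how $d$ sits inside $\dw$ on $\wedge^k\g^*\otimes\Omega(M)$, the $(-1)^{k-1}$ in the definition \eqref{eq:tild} of $\widetilde\omega$, and the $\varsigma(k)$ normalization in the passage $f_k=\varsigma(k)\prim_k$ — and they have been chosen in the references exactly so that everything cancels. Rather than re-deriving them from scratch I would lean on \cite[Prop.~2.5]{FLRZ} and \cite{WurzRyvkinMomaps} for the sign conventions, and limit my own verification to confirming that the degree-$k$ component of $\dw\prim=\widetilde\omega$ is \eqref{main_eq_1}/\eqref{main_eq_2} under the stated identification, noting that the $\dw$-closedness of $\widetilde\omega$ (already recorded in the text as a consequence of $d\omega=0$) is what guarantees the system is consistent, i.e.\ that a solution $\prim$ exists iff all the equations hold simultaneously. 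No genuinely new idea is needed beyond this; the content of the proposition is the translation between the "double complex / primitive" language and the "$L_\infty$-morphism components" language.
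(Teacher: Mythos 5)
Your overall strategy---expanding $\dw\prim=\widetilde{\omega}$ into bidegree components and matching the component in $\wedge^k\g^*\otimes\Omega^{n+1-k}(M)$ against \eqref{eq:mom}, \eqref{main_eq_1} and \eqref{main_eq_2}---is exactly the intended argument; note that the paper gives no proof of its own, since Prop.~\ref{prop:doubleprimitive} is reproduced verbatim from \cite[Prop.~2.5]{FLRZ}, so there is no alternative in-paper route to compare against. Two points in your write-up need repair. First, your parenthetical about the Chevalley--Eilenberg differential is backwards: taking values in a nontrivial module is precisely the situation in which an action term \emph{would} appear. The convention actually in force is $d_\g\otimes 1$ with $d_\g$ the Chevalley--Eilenberg differential of $\g$ with trivial coefficients, so that $(d_\g\prim_{k-1})(x_1,\dots,x_k)=\sum_{i<j}(-1)^{i+j}\prim_{k-1}([x_i,x_j],x_1,\dots,\widehat{x_i},\dots,\widehat{x_j},\dots,x_k)$, with no Lie-derivative terms. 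This is not optional bookkeeping: if $d_\g$ carried $L_{v_{x_i}}$-terms, the translation would produce terms that do not occur in \eqref{main_eq_1} and the equivalence would fail; with the trivial-coefficient convention, $\dw\widetilde{\omega}=0$ still holds, by closedness and invariance of $\omega$ via the identity of \cite[Lemma 9.2]{FRZ}.

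Second, deferring the sign verification to \cite{FLRZ} is circular---Prop.~\ref{prop:doubleprimitive} \emph{is} \cite[Prop.~2.5]{FLRZ}---and the signs are the only genuine content of the proof. They do close, and cleanly: the bidegree-$k$ component of $\dw\prim=\widetilde{\omega}$ is
\begin{equation*}
d_\g\prim_{k-1}+(-1)^{k}\,d\prim_{k}=(-1)^{k-1}\,\omega^{k},\qquad k=1,\dots,n+1,\quad \prim_0=\prim_{n+1}=0,
\end{equation*}
where $\omega^k(x_1,\dots,x_k)=\iota(v_{x_1}\wedge\cdots\wedge v_{x_k})\omega$. Substituting $\prim_j=\vs(j)f_j$, multiplying through by $-\vs(k-1)$ and using $(-1)^k\vs(k-1)=\vs(k)$ (equivalently $\vs(k)\vs(k-1)=(-1)^k$) yields exactly \eqref{main_eq_1} for $2\le k\le n$, \eqref{main_eq_2} for $k=n+1$ (where the $df$-term is absent since $\prim_{n+1}=0$), and \eqref{eq:mom} for $k=1$. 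Your displayed intermediate identity, with coefficient $(-1)^{k-1}$ on the contraction term, is not yet in this form and its signs do not visibly match \eqref{main_eq_1}; this is precisely the step that should be carried out rather than outsourced to the reference being proved.
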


\noindent
Now we apply the previous machinery to the manifolds $M_a,M_b,M_a\times M_b$ and the data given at the beginning of this section. For each of these three manifolds we obtain a double complex, which we will denote by $(K_a,\dw^a)$, $(K_b,\dw^b)$ and  $(K,\dw)$ respectively. 
 
\begin{lemma}
\label{lemma:dtotprim}
Let $\prim^C\in K^C$ be of degree $n_C$. If $\dw^C\prim^C=\widetilde{\omega_C}$ for $C=a,b$, then  $\dw \prim=\widetilde{\omega_a\wedge\omega_b}$ where
\begin{equation*}
\prim = \frac{1}{2}(-\prim^a\widetilde{\omega_b}+(-1)^{n_a}\widetilde{\omega_a}\prim^b)+(\prim^a {\omega_b}+(-1)^{n_a+1} {\omega_a}\prim^b)\in K\, .
\end{equation*}
\end{lemma}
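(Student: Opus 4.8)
The plan is to verify the identity $\dw\prim = \widetilde{\omega_a\wedge\omega_b}$ by direct computation, using as the two main inputs (i) the hypotheses $\dw^C\prim^C = \widetilde{\omega_C}$ and (ii) an explicit description of how the de Rham differential, the contraction operations $\sigma\mapsto\sigma^k$, and the tilde operation behave under taking wedge products on the product manifold $M_a\times M_b$. The key structural fact I would isolate first is a Leibniz-type rule: for $G_a$-invariant $\sigma$ on $M_a$ and $G_b$-invariant $\tau$ on $M_b$ (pulled back to $M$), the contraction of the product vector field $v_x = v_{x_a} + v_{x_b}$ (for $x = (x_a,x_b)\in\g_a\oplus\g_b$) into $\sigma\wedge\tau$ splits, because $v_{x_a}$ only contracts into $\sigma$ and $v_{x_b}$ only into $\tau$. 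Spelling this out at the level of the operations $(-)^k$ and then assembling the alternating sums, one should get a clean formula expressing $\widetilde{\sigma\wedge\tau}$ in terms of $\widetilde{\sigma}$, $\widetilde{\tau}$, $\sigma$, $\tau$ and the plain (undifferentiated, uncontracted) forms — precisely the combination of four terms appearing in the definition of $\prim$. I would state this as a preliminary lemma or a displayed computation.

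Concretely, I would set $\widetilde{\omega_a\wedge\omega_b} = \sum_{k\ge 1}(-1)^{k-1}(\omega_a\wedge\omega_b)^k$ and use that $(\omega_a\wedge\omega_b)^k(x_1,\dots,x_k) = \iota_{v_1\wedge\cdots\wedge v_k}(\omega_a\wedge\omega_b)$ expands, via the Leibniz rule for interior product on a wedge, into a sum over ways of distributing the $v_i$ between the two factors — but since $v_i = v_{i}^a + v_i^b$ and $\iota_{v_i^a}\omega_b = 0 = \iota_{v_i^b}\omega_a$, only the "coherent" distributions survive. After bookkeeping the signs (this is where the factors $(-1)^{n_a}$, the $\tfrac12$, and $\varsigma$ conventions enter), one identifies $\widetilde{\omega_a\wedge\omega_b}$ with an expression built from $\widetilde{\omega_a}\,\widetilde{\omega_b}$-type products and the mixed terms $\widetilde{\omega_a}\,\omega_b$, $\omega_a\,\widetilde{\omega_b}$. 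The second ingredient is the behavior of $\dw$ on products of elements of $K_a$ and $K_b$ sitting inside $K$: since $\dw = d_\g \otimes 1 + 1\otimes d$ and $\g = \g_a\oplus\g_b$ so $d_\g = d_{\g_a} + d_{\g_b}$, and since $d$ on $M_a\times M_b$ restricted to pullbacks is just $d_a + d_b$, one gets a graded Leibniz rule $\dw(\prim^a\cdot\eta) = (\dw^a\prim^a)\cdot\eta \pm \prim^a\cdot(\dw\eta)$ for $\eta$ coming from the $b$-side, and symmetrically. Applying this to each of the four summands in $\prim$, substituting the hypotheses $\dw^a\prim^a = \widetilde{\omega_a}$ and $\dw^b\prim^b = \widetilde{\omega_b}$, and using that $\dw\omega_C = \widetilde{\omega_C} - \omega_C$ (the analogue of $[1]_0 = -\omega$, reflecting that $\omega_C$ as an element of $K$ of pure de Rham degree has $\dw\omega_C = d\omega_C \pm (\text{Chevalley part}) $ — more precisely one uses that $\dw(\omega_C^{\text{as element of }K}) = \widetilde{\omega_C}$ minus the degree-$0$-in-$\g$ piece, which is $\omega_C$ itself), everything should telescope.

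The bulk of the work — and the main obstacle — is the sign bookkeeping. There are several independent sources of signs: the Koszul/Chevalley signs in $d_\g$ on $\wedge^\bullet\g^*$, the $(-1)^k$ twisting of $d$ in the total differential $\dw$, the signs $(-1)^{k-1}$ in the definition of $\widetilde{\sigma}$, the signs produced when a de Rham form of degree $n_a$ is commuted past $\g^*$-factors or past $b$-side forms in forming products inside $K$, and the interior-product Leibniz signs when splitting $\iota_{v_1\wedge\cdots\wedge v_k}$ across a wedge. My strategy would be to fix, once and for all, the isomorphism $K \cong K_a \widehat\otimes K_b$ (completed/total tensor product of double complexes, with the Koszul sign rule for the two bidegrees), check that under this identification $\dw$ corresponds to $\dw^a\otimes 1 \pm 1\otimes\dw^b$ with the standard sign, and that $\widetilde{\omega_a\wedge\omega_b}$ corresponds under the identification precisely to the element whose $\dw$-primitive is claimed — then the four-term formula for $\prim$ is essentially forced, and verifying it reduces to matching coefficients, which I would organize degree-by-degree in $\g^*$ rather than all at once. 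I expect the coefficient $\tfrac12$ to arise because the "symmetric" combination $-\prim^a\widetilde{\omega_b} + (-1)^{n_a}\widetilde{\omega_a}\prim^b$ double-counts the mixed cross-terms that $\dw$ produces from each summand, so the $\tfrac12$ is exactly the factor needed so that $\dw$ of this part contributes the cross-term $\widetilde{\omega_a}\,\widetilde{\omega_b}$ once; meanwhile $\dw$ of the second part $\prim^a\omega_b + (-1)^{n_a+1}\omega_a\prim^b$ supplies the remaining pieces $\widetilde{\omega_a}\,\omega_b$ and $\omega_a\,\widetilde{\omega_b}$ together with corrections that cancel the $-\prim^a\omega_b$, $-\omega_a\prim^b$ terms coming from the first part via $\dw\omega_C = \widetilde{\omega_C}-\omega_C$. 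Once the signs are pinned down this becomes a short verification; I would not attempt to present the full sign chase in the final text but rather state the Leibniz rules precisely and indicate the cancellations.
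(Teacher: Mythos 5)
Your first step is sound and is in fact the paper's own decomposition: expanding the contraction of $v_x=v_{x_a}+v_{x_b}$ into $\omega_a\wedge\omega_b$ (using $\iota_{v_{x_a}}\omega_b=0=\iota_{v_{x_b}}\omega_a$) yields exactly $\widetilde{\omega_a\wedge\omega_b}=-\widetilde{\omega_a}\widetilde{\omega_b}+\widetilde{\omega_a}\,{\omega_b}+{\omega_a}\,\widetilde{\omega_b}$, which the paper packages slickly as multiplicativity of $\widehat{\omega_C}:={\omega_C}-\widetilde{\omega_C}$, i.e. $\widehat{\omega_a}\widehat{\omega_b}=\widehat{\omega_a\wedge\omega_b}$. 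The problem is in the second half of your plan, which rests on the identity $\dw\omega_C=\widetilde{\omega_C}-\omega_C$. This identity is false: $\dw$ raises total degree by one, whereas $\widetilde{\omega_C}-\omega_C$ has the same total degree as $\omega_C$, so the equation is not even degree-consistent. You appear to be conflating the differential of the bicomplex $K$, which is just $d_\g\otimes 1+1\otimes d$ and contains no contraction term, with a Cartan-type equivariant differential (where, for invariant closed $\omega$, the contraction piece does produce $-\omega^1$). In the complex actually used here the correct inputs are $\dw^C\widetilde{\omega_C}=0$ (recalled from \cite[\S 2]{FLRZ}, a consequence of $\omega_C$ being closed and invariant) and $\dw^C\omega_C=0$ once $K^C$ is enlarged to include $\wedge^{0}(\g_C)^*\otimes\Omega(M_C)$ (again closedness plus invariance).

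Because of this, the telescoping/cancellation mechanism you describe is not what happens, and as written the verification would not close up: with the correct facts, the Leibniz rule applied to $\prim$ gives that $\frac{1}{2}(-\prim^a\widetilde{\omega_b}+(-1)^{n_a}\widetilde{\omega_a}\prim^b)$ contributes exactly $-\widetilde{\omega_a}\widetilde{\omega_b}$ (your explanation of the $\frac{1}{2}$ is right: each of the two summands produces a copy of $\widetilde{\omega_a}\widetilde{\omega_b}$), while $\prim^a\omega_b$ and $(-1)^{n_a+1}\omega_a\prim^b$ contribute $\widetilde{\omega_a}\,{\omega_b}$ and ${\omega_a}\,\widetilde{\omega_b}$ respectively; no terms of the form $\prim^a\omega_b$, $\omega_a\prim^b$ or $\prim^a\widetilde{\omega_b}$ are ever generated by $\dw$, so there is nothing to cancel. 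Your proposed bookkeeping would instead manufacture such spurious terms from the false identity and then cancel them against terms that do not occur. Replacing $\dw\omega_C=\widetilde{\omega_C}-\omega_C$ by $\dw^C\omega_C=0$ and $\dw^C\widetilde{\omega_C}=0$ turns your computation into the paper's short check; the elaborate sign analysis via $K\cong K_a\otimes K_b$ is then unnecessary beyond the standard Koszul sign in the Leibniz rule.
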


\begin{proof}
First notice that 

\begin{equation}\label{eq:3terms}
\widetilde{\omega_a\wedge\omega_b}=-\widetilde{\omega_a}\widetilde{\omega_b}+\widetilde{\omega_a}{\omega_b}+ {\omega_a}\widetilde{\omega_b}\, .
\end{equation}

\noindent
This is a consequence of $\widehat{\omega_a}\widehat{\omega_b}=\widehat{\omega_a\wedge\omega_b}$ for $\widehat{\omega_C}:={\omega_C}-\widetilde{\omega_C}$.

Now we exhibit $\dw$-primitives for each of the three summands in eq. \eqref{eq:3terms}.
\begin{align*}
\dw(\prim^a\widetilde{\omega_b}+(-1)^{n_a+1}\widetilde{\omega_a}\prim^b)&=
\dw^a\prim^a\widetilde{\omega_b}+(-1)^{n_a}\prim^a\dw^b\widetilde{\omega_b}
+(-1)^{n_a+1}\dw^a\widetilde{\omega_a}\prim^b+ \widetilde{\omega_a}\dw^b\prim^b\\
&=2\widetilde{\omega_a}\widetilde{\omega_b} 
\end{align*}
where in the last equation we used our assumption and $\dw^C\widetilde{\omega_C}=0$, which holds by \cite[\S 2]{FLRZ}.

Further $$\dw(\prim^a {\omega_b})=\dw^a\prim^a {\omega_b}+(-1)^{n_a}
\prim^a \dw^b{\omega_b}=\widetilde{\omega_a}{\omega_b},$$
where in the last equation to compute $ \dw^b{\omega_b}=0$ we have to enlarge the double complex $K^b$ to include $\wedge^{0} (\g_b)^*\otimes \Omega(M_b)\cong\Omega(M_b)$.
 
Similarly, $$\dw( (-1)^{n_a+1}{\omega_a}\prim^b)={\omega_a} \widetilde{\omega_b}.$$
\end{proof}

\noindent
Applying Prop. \ref{prop:doubleprimitive}, the $\dw$-primitive of $\widetilde{\omega_a\wedge\omega_b}$ obtained in Lemma \ref{lemma:dtotprim} allows us to construct a homotopy moment map for the $\g$ action on $(M,\omega_a\wedge\omega_b)$:

\begin{thm}
\label{thm:morphismproduct} Let $G_{C}$ be a Lie group with Lie algebra $\mathfrak{g}_{C}$, where $C=a,b$. Let $( M_{C},\omega_{C})$ be a pre-$n_{C}$-plectic manifold equipped with a $G_{C}$ action admitting a homotopy moment map  $f^{C}:\mathfrak{g}_{C}\to L_{\infty}\left( M_{C},\omega_{C}\right)$. Then 
the action of $G_{a}\times G_{b}$ on $\left( M,\omega\right):=(M_a\times M_b, \omega_a\wedge \omega_b)$
admits a homotopy moment map with components determined by graded skew-symmetry and the formulae 
($k=1,\dots , n_{1}+n_{2}+1$)
\begin{center}
\fbox{\begin{Beqnarray*}
 F_{k}:\left(\mathfrak{g}_{a}\oplus \mathfrak{g}_{b}\right)^{\otimes k} &\to & L_{\infty}\left( M,\omega\right)\,\nonumber\\ 
\left(x^{1}_{a}, \dots , x^{m}_{a},x^{1}_{b}, \dots, x^{l}_{b}\right) &\mapsto & c^{a}_{m,l}\,f^{a}_{m}\left(x^{1}_{a}, \dots , x^{m}_{a}\right)\wedge \iota_{1, \dots ,l}\omega_{b} \\&+& c^{b}_{m,l}\,\iota_{1, \dots ,m}\omega_{a} \wedge f^{b}_{l}\left(x^{1}_{b}, \dots , x^{l}_{b}\right)\nonumber\, ,\end{Beqnarray*}}
\end{center}

\noindent
where $m,l\ge 0$ with $m+l=k$, $x^{i}_{a}\in \g_a$ and $x^{i}_{b}\in \g_b$. Here we define   $f^{a}_{0} = f^{b}_{0} = 0$ and
\begin{equation*}
\iota_{1, \dots ,i}\,\omega_{C} = \iota\left(v_{f^{C}_{1}\left(x^{1}_{C}\right)}\wedge \dots \wedge v_{f^{C}_{1}\left(x^{i}_{C}\right)}\right)\omega_{C}\,.
\end{equation*}
 The coefficients are defined as follows for all $m \geq 1,l\geq 1$:
\begin{eqnarray*}
c^{a}_{m,l} & = &\frac{1}{2}\vs (m+l) \vs (m)(-1)^{(n_{a}+1-m)l}\, , \\c^{b}_{m,l} & = & \frac{1}{2}\vs (m+l) \vs (l)(-1)^{(n_{a}+1-m)(l+1)}\, ,
\end{eqnarray*}
and
\begin{equation*}
c^{a}_{m,0} = 1\, ,\qquad c^{b}_{0,l} = (-1)^{(l+1)(n_{a}+1)}\,.
\end{equation*}
Recall that $\vs(k) = -(-1)^{\frac{k(k+1)}{2}}$. 
\end{thm}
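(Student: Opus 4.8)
The plan is to deduce Theorem~\ref{thm:morphismproduct} directly from Lemma~\ref{lemma:dtotprim} together with Proposition~\ref{prop:doubleprimitive}. The core of the argument is purely formal: Lemma~\ref{lemma:dtotprim} produces an explicit element $\prim\in K$ of total degree $n_a+n_b+1$ with $\dw\prim=\widetilde{\omega_a\wedge\omega_b}$, and Proposition~\ref{prop:doubleprimitive} then says that $F_k:=\vs(k)\prim_k$ are the components of a homotopy moment map for the $G_a\times G_b$-action on $(M,\omega)$. So the whole content of the theorem is to unwind $\prim$ into its homogeneous pieces $\prim_k\in\wedge^k(\g_a\oplus\g_b)^*\otimes\Omega^{n_a+n_b+1-k}(M)$ and to check that $\vs(k)\prim_k$ agrees with the boxed formula for $F_k$, i.e. that the combinatorial coefficients $c^a_{m,l}$ and $c^b_{m,l}$ come out as stated.

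\textbf{Key steps.} First I would expand $\prim^C=\prim^C_1+\dots+\prim^C_{n_C}$ with $\prim^C_m\in\wedge^m\g_C^*\otimes\Omega^{n_C-m}(M_C)$; by Proposition~\ref{prop:doubleprimitive} applied to each factor, $\prim^C_m=\vs(m)^{-1}f^C_m=\vs(m)f^C_m$ (since $\vs(m)=\pm1$). Next I would expand the four terms of $\prim$ from Lemma~\ref{lemma:dtotprim}, using $\widetilde{\omega_C}=\sum_{j\ge1}(-1)^{j-1}\omega_C^j$ from \eqref{eq:tild} and keeping in mind that $\omega_C^j(x^1,\dots,x^j)=\iota(v_1\wedge\dots\wedge v_j)\omega_C$, which under the identification of Hamiltonian vector fields is exactly the $\iota_{1,\dots,j}\omega_C$ of the theorem. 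The crucial bookkeeping is: identify which product of a piece of $\prim^a$ (or $\omega_a$) with a piece of $\widetilde{\omega_b}$ (or $\omega_b$) contributes to a fixed arity $k=m+l$, where $m$ is the number of $\g_a$-slots and $l$ the number of $\g_b$-slots. Since $\wedge^k(\g_a\oplus\g_b)^*=\bigoplus_{m+l=k}\wedge^m\g_a^*\otimes\wedge^l\g_b^*$, the $(m,l)$-component of $\prim_k$ receives exactly one contribution from the "$a$-heavy" family ($\prim^a_m$ times an $\omega_b$-term) and one from the "$b$-heavy" family. Then I would collect the signs: the sign on $\prim^C_m$, the sign $(-1)^{j-1}$ from $\widetilde{\omega_C}$ expanded in the $l$ (resp. $m$) slots, the Koszul sign arising from moving the degree-$m$ form $\prim^a_m\in\wedge^m\g_a^*\otimes\Omega^{n_a-m}(M_a)$ past the degree-$(l + \text{form-degree})$ factor when one writes the wedge in the order $\g_a$-slots then $\g_b$-slots, and finally the overall prefactors $-\tfrac12,\ \tfrac12(-1)^{n_a},\ 1,\ (-1)^{n_a+1}$ from the four summands of $\prim$, all multiplied by the $\vs(k)$ coming from $F_k=\vs(k)\prim_k$. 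Matching this product against $c^a_{m,l}=\tfrac12\vs(m+l)\vs(m)(-1)^{(n_a+1-m)l}$ and $c^b_{m,l}=\tfrac12\vs(m+l)\vs(l)(-1)^{(n_a+1-m)(l+1)}$, and separately handling the two boundary cases $l=0$ (only the $\prim^a\omega_b$ and the $-\tfrac12\prim^a\widetilde{\omega_b}$ terms survive, with $f^a_0=0$ killing part) giving $c^a_{m,0}=1$, and $m=0$ giving $c^b_{0,l}=(-1)^{(l+1)(n_a+1)}$, completes the proof.

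\textbf{Main obstacle.} The only real difficulty is a careful sign computation: correctly tracking the Koszul signs that appear when the wedge product of an element of $K_a$ with an element of $K_b$ is re-expressed in the "standard order" (all $\g_a$-slots before all $\g_b$-slots, matching the convention implicit in the boxed formula for $F_k$), and reconciling the normalization $f_k=\vs(k)\prim_k$ of Proposition~\ref{prop:doubleprimitive} with the two different $\vs$-factors $\vs(m),\vs(l)$ built into $\prim^a,\prim^b$. In particular one must be consistent about how the bigraded complex $K=\wedge^{\ge1}(\g_a\oplus\g_b)^*\otimes\Omega(M)$ decomposes as (a completion of) $K_a\otimes K_b$ and what sign the product map carries; everything else is direct substitution. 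I would isolate this by first verifying the claimed formula in a low-arity case (say $k=2$, $m=l=1$) to pin down the sign convention, and then run the general $(m,l)$ bookkeeping. The boundary cases $l=0$ and $m=0$ should be checked last, since there the degenerate pieces ($f^C_0=0$) and the enlargement of the double complex to include $\wedge^0(\g_b)^*\otimes\Omega(M_b)$ (as already used in the proof of Lemma~\ref{lemma:dtotprim}) interact.
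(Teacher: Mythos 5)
Your proposal is correct and follows the paper's own proof essentially verbatim: apply Prop.~\ref{prop:doubleprimitive} to the $\dw$-primitive $\prim$ produced by Lemma~\ref{lemma:dtotprim}, expand $\prim$ into bidegree components using $f^C_k=\vs(k)\prim^C_k$ and eq.~\eqref{eq:tild}, and track the Koszul signs slot by slot to read off $c^a_{m,l}$, $c^b_{m,l}$ and the boundary coefficients. One small correction to your boundary-case bookkeeping: for $l=0$ the term $-\tfrac12\prim^a\widetilde{\omega_b}$ contributes nothing because $\widetilde{\omega_b}$ lies in $\wedge^{\ge1}\g_b^{*}\otimes\Omega(M_b)$ (it has no arity-zero piece), not because $f^a_0=0$; only $\prim^a\,\omega_b$ survives, which is exactly how the paper obtains $c^a_{m,0}=1$.
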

 
\begin{remark} The formula for $F_k$ simplifies once written
using the operations $[\dots]$ introduced in Def. \ref{def:square}:
\begin{align*}
F_k(x^{1}_{a}, \dots , x^{m}_{a},x^{1}_{b}, \dots, x^{l}_{b})&=
\widehat{c^{a}_{m,l}}\,f^{a}_{m}\left(x^{1}_{a}, \dots , x^{m}_{a}\right)\wedge 
\left[f_1^b(x^1_b),\dots,f_1^b(x^l_b)\right] \\&+ \widehat{c^{b}_{m,l}}\,\left[f_1^a(x^1_a),\dots,f_1^a(x^m_a)\right] \wedge f^{b}_{l}\left(x^{1}_{b}, \dots , x^{l}_{b}\right),
\end{align*}
where  for all $m \geq 1,l\geq 1$:
\begin{eqnarray*}
\widehat{c^{a}_{m,l}} & = &-\frac{1}{2}(-1)^{(n_a+1)l}\, ,\\
\widehat{c^{b}_{m,l}} & = & -\frac{1}{2}(-1)^{(n_a+1)(l+1)+m} 
\, ,
\end{eqnarray*}
and
\begin{equation*}
\widehat{c^{a}_{m,0}} = -1\, ,\qquad \widehat{c^{b}_{0,l}} = -(-1)^{(l+1)(n_{a}+1)}.
\end{equation*}
This is a straightforward consequence of $\vs(m)\vs(l)\vs(m+l)=-(-1)^{ml}$ for all integers $m,l\ge 0$.
\end{remark}

\begin{proof}
Prop. \ref{prop:doubleprimitive}
and Lemma \ref{lemma:dtotprim} deliver a homotopy moment map 
$F\colon \mathfrak{g}_{a}\oplus\mathfrak{g}_{b}\to L_{\infty}(M_a\times M_b,\omega)$ whose components $F_{k}$, for  $k=1,\dots,n_{a}+n_{b}+1,$ 
 are given by
\begin{equation*}
F_{k} = \vs(k)\prim_{k}\, ,
\end{equation*}
where 
\begin{equation}
\label{eq:primproof}
\prim = \frac{1}{2}(-\prim^a\widetilde{\omega_b}+(-1)^{n_a}\widetilde{\omega_a}\prim^b)+(\prim^a {\omega_b}+(-1)^{n_a+1} {\omega_a}\prim^b)\,.
\end{equation}
 Let us point out that 
\begin{equation*}
\prim_{k}\in \Lambda^{k}\left(\mathfrak{g}^{\ast}_{a}\oplus\mathfrak{g}^{\ast}_{b}\right)\otimes\Omega^{(n_{a}+n_{b}+1-k)}\left( M_{a}\times M_{b}\right)\, .
\end{equation*}
In order to prove the theorem we just have to write  $F_{k}$   using equation \eqref{eq:primproof} and  $f_k^a=\varsigma(k)\prim^a_k$, $f_k^b=\varsigma(k)\prim^b_k$. We do so {evaluating} the components of $F$ on elements of $\g_a$ and of $\g_b$.

We have
\begin{align*}
F_{m}(x_{a}^{1},\hdots,x_{a}^{m})=\vs(m) \prim_{m}(x_{a}^{1},\hdots,x_{a}^{m}) &=  \vs(m)\prim^a_m(x_{a}^{1},\hdots,x_{a}^{m})\wedge {\omega_b}\\
&=f^{a}_{m}(x^{a}_{1},\hdots,x^{a}_{m})\,\wedge \omega_{b}\, ,
\end{align*}

\noindent
using that $\prim^{a}_{m} = \vs(m) f^{a}_{m}$ in the last equality. In the second equality we used eq. \eqref{eq:primproof} (notice that on the r.h.s. of eq. \eqref{eq:primproof}, only the summand $\prim^a\omega_b$  gives a contribution). We conclude that

\begin{equation*}
c^{a}_{m,0} = 1\, , \qquad m\geq 1\, .
\end{equation*}

\noindent
Let us take now
\begin{align*}
F_{l}(x_{b}^{1},\hdots,x_{b}^{l}) =\vs(l) \prim_{l}(x_{b}^{1},\hdots,x_{b}^{l}) &= \vs(l) (-1)^{n_a+1} ({\omega_a}\prim^b_l)(x_{b}^{1},\hdots,x_{b}^{l})\\
 &=  (-1)^{n_a+1} ({\omega_a}f^b_l)(x_{b}^{1},\hdots,x_{b}^{l})\\
&=(-1)^{(n_a+1)(l+1)} {\omega_a}\wedge f^{b}_{l}(x_{b}^{1},\hdots,x_{b}^{l}).
\end{align*}

\noindent
The last equality holds since\footnote{We are slightly abusing the notation by denoting the product of two elements in the double-complexes $K_{C}$ or $K$ and the wedge product of forms simply by juxtaposition.}, if we pick a basis $\{\xi^b_i\}$ of $\g_b^*$ and write
$f^{b}_{l}$ as a sum of terms of the form $\xi^b_{i_1}\wedge\cdots\wedge\xi^b_{i_l}\otimes \beta \in \Lambda^{l}\left(\mathfrak{g}^{\ast}_{b}\right)\otimes\Omega^{(n_{b}-l)}\left( M_{b}\right)$, then
\begin{equation*}
(1\otimes{\omega_a}) (\xi^b_{i_1}\wedge\cdots\wedge\xi^b_{i_l}\otimes \beta) = (-1)^{(n_{a}+1)l}
\xi^b_{i_1}\wedge\cdots\wedge\xi^b_{i_l}\otimes ({\omega_a} \wedge \beta).
\end{equation*}

\noindent
We obtain 

\begin{equation*}
c^{b}_{0,l} = (-1)^{(n_{a}+1)(l+1)}\, , \qquad l\geq 1\, .
\end{equation*}

For $m,l\ge 1$ consider
\begin{align*}
&F_{m+l}(x_{a}^{1},\hdots,x_{a}^{m},x_{b}^{1},\hdots,x_{b}^{l})\\
 =& 
\vs(m+l)\prim_{m+l}(x_{a}^{1},\hdots,x_{a}^{m},x_{b}^{1},\hdots,x_{b}^{l})\\
=&\vs(m+l)\frac{1}{2}\left(-\prim^a_m(\widetilde{\omega_b})_l+(-1)^{n_a}(\widetilde{\omega_a})_m\prim^b_l\right)(x_{a}^{1},\hdots,x_{a}^{m},x_{b}^{1},\hdots,x_{b}^{l})\\
=&\vs(m+l)\frac{1}{2}\left(
-\vs(m)(-1)^{l-1}f^a_m{\omega_b}^l
+(-1)^{n_a}(-1)^{m-1}\vs(l){\omega_a}^m f^b_l\right)(x_{a}^{1},\hdots,x_{a}^{m},x_{b}^{1},\hdots,x_{b}^{l}),
\end{align*}
where in the last equality we used eq. \eqref{eq:tild}.
We have
\begin{equation*}
 (f^a_m {\omega_b}^{l})(x_{a}^{1},\hdots,x_{a}^{m},x_{b}^{1},\hdots,x_{b}^{l})=
 (-1)^{(n_{a}-m)l}f^a_{m}(x_{a}^{1},\hdots,x_{a}^{m}) \wedge
 {\omega_b}^{l}(x_{b}^{1},\hdots,x_{b}^{l})\, ,
\end{equation*}
using $f^a_m\in 
\Lambda\mathfrak{g}^{\ast}_{a} \otimes\Omega^{n_{a}-m}\left( M_{a}\right)$ and 
${\omega_b}^{l}\in 
\Lambda^{l} \mathfrak{g}^{\ast}_{b} \otimes\Omega \left( M_{b}\right)$.
Therefore

\begin{equation*}
c^{a}_{m,l} =\frac{1}{2}\vs(l+m)\vs(m) (-1)^{(n_{a}+1-m)l}\, .
\end{equation*}

\noindent
Similarly,
\begin{equation*}
({\omega_a}^{m}f^b_{l})(x_{a}^{1},\hdots,x_{a}^{m},x_{b}^{1},\hdots,x_{b}^{l})=
 (-1)^{(n_{a}+1-m)l}{\omega_a}^{m}(x_{a}^{1},\hdots,x_{a}^{m}) \wedge
 f^b_{l}(x_{b}^{1},\hdots,x_{b}^{l})\, ,
\end{equation*}
using ${\omega_a}^{m}\in 
\Lambda\mathfrak{g}^{\ast}_{a} \otimes\Omega^{n_{a}+1-m}\left( M_{a}\right)$ and 
$f^b_{l}\in 
\Lambda^{l} \mathfrak{g}^{\ast}_{b} \otimes\Omega \left( M_{b}\right)$.
Hence

\begin{equation*}
c^{b}_{m,l} =\frac{1}{2}\vs(l+m)\vs(l) (-1)^{(n_{a}+1-m)(l+1)}\, .
\end{equation*}
\end{proof}

\begin{ep}\label{ex:presym}
We spell out the homotopy  moment map   constructed in Thm. \ref{thm:morphismproduct} in the case that $M_a$ and $M_b$ are pre-symplectic manifolds, i.e. $n_a=n_b=1$. In that case $f^a\colon \g_a\to C^{\infty}(M_a)$ is an ordinary comoment map, just like $f^b$, and
$\left(M,\omega\right)$ is a pre-$3$-plectic manifold. One obtains 
\begin{align*}
F_1(x_a\oplus x_b)&=f^a(x_a)\cdot\omega_b+\omega_a \cdot f^b(x_b)\\
F_2(x_a\oplus x_b, y_a\oplus y_b)&=\frac{1}{2}\left(-f^a(x_a)\cdot\iota_{v_{f^{b}(y_b)}}\omega_b+ \iota_{v_{f^{a}(x_a)}}\omega_a\cdot f^b(y_b)\right)-
\left(x\leftrightarrow y \right)
\\
F_3(x_a\oplus x_b, y_a\oplus y_b, z_a\oplus z_b)&=-\frac{1}{2}\left(f^a(x_a)\cdot\iota_{v_{f^{b}(y_b)}\wedge v_{f^{b}(z_b)}}\omega_b+  \iota_{v_{f^{a}(x_a)}\wedge v_{f^{a}(y_a)}}\omega_a\cdot f^b(z_b)\right)+ c.p.
\end{align*}
where $x_C,y_C,z_C\in \g_C$ for $C=a,b$ and ``c.p.'' denotes cyclic permutations of $x,y,z$.
\end{ep}

\subsection{Non-associativity of the construction} 
The construction of  homotopy  moment maps for product manifolds given in
Thm. \ref{thm:morphismproduct} is not associative.
More precisely: for $C=a,b,c$ let $G_{C}$ be a Lie group with Lie algebra $\mathfrak{g}_{C}$, acting on a pre-$n_{C}$-plectic manifold  $( M_{C},\omega_{C})$   with homotopy  moment map $f^{C}:\mathfrak{g}_{C}\to L_{\infty}\left( M_{C},\omega_{C}\right)$. Denote by $f^a* f^b$ the
homotopy moment map for the action of $G_{a}\times G_{b}$ on $\left( M_a\times M_b,\omega_a\wedge \omega_b\right)$ constructed in Thm. \ref{thm:morphismproduct}.
Then \begin{equation}\label{fabc}
(f^a* f^b)* f^c\neq f^a * (f^b* f^c),
\end{equation}
as one can see from a straightforward computation using the fact that 
$c^{a}_{m,l}=\pm \frac{1}{2}$ for $m \geq 1,l\geq 1$.

Indeed, the construction of the  $\dw$-primitives  
 done in Lemma \ref{lemma:dtotprim} is also not associative: denote by   $\prim^C$ the elements of $K^C$ corresponding to the homotopy moment maps $f^C$ (via Prop. \ref{prop:doubleprimitive}).
If we  denote by $\prim^a* \prim^b$ the $\dw$-primitive
of  $\widetilde{\omega_a\wedge\omega_b}$ constructed in Lemma \ref{lemma:dtotprim},
then $(\prim^a* \prim^b)* \prim^c$ and $\prim^a* (\prim^b* \prim^c)$ are different\footnote{One could hope that redefining $\prim^a* \prim^b$ by adding a real multiple of $\dw(\prim^a \prim^b)$ to it might remove this issue, but this is not the case.} primitives 
for $\widetilde{\left((\omega_a\wedge\omega_b)\wedge\omega_c\right)}=
\widetilde{\left(\omega_a\wedge(\omega_b\wedge\omega_c)\right)}$.
The difference between these two primitives is
$$\frac{1}{4}(
-\prim^a\widetilde{\omega_b}\widetilde{\omega_c}
+\widetilde{\omega_a}\widetilde{\omega_b}\prim^c)=\dw\left(-\frac{1}{4}
\prim^a\widetilde{\omega_b}\prim^c\right).$$

\noindent
Hence the two homotopy  moment maps appearing in eq. \eqref{fabc} are \emph{inner equivalent} in the sense of \cite[Remark 7.10]{FLRZ}. This notion of  inner equivalence is the one that arises naturally considering the complex
$\wedge^{\ge 1} (\g_a\times \g_b\times \g_c)^*\otimes \Omega(M_a\times M_b\times M_c)$, and can be characterized as equivalence of $L_{\infty}$-morphisms (see \cite[Prop. A2]{FLRZ}).\\

Under quite restrictive conditions, there is another way to construct homotopy moment maps for product manifolds, which does have the property of being  associative in the sense above.

\begin{remark}
Given an action of $G_a$ on the pre-$n_a$-plectic manifold $(M_a,\omega_a)$, the theorem 
\cite[Thm. 6.8]{FRZ} provides a map
$$\Phi_{M_a}\colon \{\text{Closed extensions of $\omega_a$ in $C_{G_a}(M_a)$}\}\to \{\text{Homotopy moment maps for  $(M_a,\omega_a)$}\},$$
where $C_{G_a}(M_a)=(S \g_a^*\otimes \Omega(M_a))^{G_a}$ is the
Cartan model for the equivariant cohomology of the $G_a$ action on $M_a$ (it is a differential graded algebra).
This map is not surjective   in general \cite[\S 7.5]{FRZ}. It is also not injective in general: by the formulae in \cite[Thm. 6.8]{FRZ} it is clear that, if $\g_a$ is a  abelian Lie algebra, then  the component lying in $(S^2 \g_a^*\otimes \Omega^{n_a-3}(M_a))^{G_a}$ of a closed extension $\psi^a$   can not be recovered from the homotopy moment map $\Phi_{M_a}(\psi^a)$.

However, in the cases in which $\Phi_{M_a}$ and $\Phi_{M_b}$ are injective\footnote{The same prescription does not seem to work without the injectivity assumption, for in that case it seems to depend on the choice of $\psi^a$ and $\psi^b$. 
In view of the formulae in \cite[Thm. 6.8]{FRZ}, the technical reason behind this is the following: if $P_2^a\in S^2\g_a^*$ is a quadratic polynomial on the Lie algebra $\g_a$, then  the total skew-symmetrization of $P_2^a([\cdot,\cdot],[\cdot,\cdot])\colon \g_a^{\otimes 4}\to \RR$ does not seem to be determined by 
the total skew-symmetrization of $P_2^a(\cdot,[\cdot,\cdot])\colon \g_a^{\otimes 3}\to \RR$.},
one can carry out the following construction: 
if  homotopy  moment maps $f^C$ for $(M_C,\omega_C)$ arising from  closed extensions   in the Cartan model ($C=a,b$) are given, then 
\begin{equation}\label{eq:prescr}
\Phi_{M_a\times M_b}(\psi^a\cdot\psi^b)
\end{equation}
is a homotopy  moment map for $(M_a\times M_b,\omega_a\wedge \omega_b)$,   where $\psi^C$ is determined by $\Phi_{M_C}(\psi^C)=f^C$, and the dot denotes the product in the Cartan model $C_{G_a\times G_b}(M_a\times M_b)$.
 This prescription has the property of being  associative, in the sense above, for the simple reason that the algebra structure in the Cartan model is associative.
 
In the special case of pre-symplectic manifolds $(M_a,\omega_a)$ and $(M_b,\omega_b)$, the injectivity assumption is satisfied. The above prescription \eqref{eq:prescr} delivers a homotopy moment map $H$ for  $(M_a\times M_b,\omega_a\wedge \omega_b)$, which as expected 
is different from the one $F$ obtained in Ex. \ref{ex:presym}: we have $H_1=F_1$, 
$H_2=F_2$, but
\begin{align*}
H_3(x_a\oplus x_b, y_a\oplus y_b, z_a\oplus z_b)=&\frac{2}{3}F_3(x_a\oplus x_b, y_a\oplus y_b, z_a\oplus z_b)\\
&-\frac{1}{6}\left(f^a(x_a)f^b([y_b,z_b])+f^a([y_a,z_a])f^b(x_b)+c.p.\right)
\end{align*} 
where $x_C,y_C,z_C\in \g_C$ for $C=a,b$.
\end{remark}


\section{Application: homotopy moment maps for iterated powers $(M,\omega^m)$ }\label{section:iterated}


In Section \ref{sec:Momentmapproduct} we have shown how to build a homotopy moment map for the product manifold of two pre-multisymplectic manifolds, assuming that a homotopy moment map for the individual manifolds exist. Here we apply this construction to some specific examples of geometrical interest: powers of  closed forms and Hyperk\"ahler manifolds.

\subsection{Restrictions}  

 Let $G$ be a Lie group with Lie algebra $\g$, acting on a pre-$n$-plectic manifold $(M,\omega)$ with homotopy moment map $f\colon \g\to L_{\infty}(M,\omega)$.  One obtains new actions, either restricting to   a Lie subgroup of $G$ or to an invariant  submanifold of $(M,\omega)$. We display homotopy moment maps for both cases.  

\begin{lemma}\label{lem:restrict}
Let $H\subset G$ be a Lie subgroup, and denote by $j\colon \h\hookrightarrow \g$ the inclusion of its Lie algebra.
The restricted action of $H$ on $(M,\omega)$ has homotopy moment map $f \circ j\colon \h\to L_{\infty}(M,\omega)$.
\end{lemma}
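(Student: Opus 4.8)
The plan is to check directly that $f\circ j$ satisfies the defining identities of a homotopy moment map, namely the condition \eqref{eq:mom} together with the $L_\infty$-morphism equations \eqref{main_eq_1}--\eqref{main_eq_2}, exploiting the fact that pulling back an $L_\infty$-morphism along a Lie algebra homomorphism again yields an $L_\infty$-morphism. First I would observe that the inclusion $j\colon\h\hookrightarrow\g$ is a morphism of Lie algebras, hence an $L_\infty$-morphism with only a first component (its higher components vanish), and that the composition of $L_\infty$-morphisms is again an $L_\infty$-morphism; therefore $f\circ j\colon\h\to L_\infty(M,\omega)$ is an $L_\infty$-morphism, with components $(f\circ j)_k = f_k\circ(\wedge^k j)$, i.e. $(f\circ j)_k(x_1,\dots,x_k)=f_k(j(x_1),\dots,j(x_k))$ for $x_i\in\h$.

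Next I would verify the moment map condition. For $x\in\h$ the Hamiltonian-type condition reads $d(f\circ j)_1(x) = d f_1(j(x)) = -\iota_{v_{j(x)}}\omega$; since the infinitesimal action of $H$ on $M$ is the restriction of that of $G$ along $j$, the fundamental vector field of $x\in\h$ equals $v_{j(x)}$, so the right-hand side is exactly $-\iota_{v_x^{H}}\omega$, which is \eqref{eq:mom} for the $H$-action. This is essentially immediate from functoriality of the fundamental-vector-field assignment.

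Concretely, rather than invoking the abstract functoriality one can also just substitute $x_i\mapsto j(x_i)$ into equations \eqref{main_eq_1} and \eqref{main_eq_2}: every term on the left-hand side involves $f_{k-1}$ (resp. $f_n$) applied to brackets and elements of $\g$ coming from the image of $j$, and since $j[x_i,x_j]_\h = [j(x_i),j(x_j)]_\g$ this is precisely the left-hand side of the corresponding identity for $f$ evaluated on $j(x_1),\dots,j(x_k)$; likewise the right-hand side, which involves $d f_k$ and contractions by the fundamental vector fields $v_{j(x_i)} = v_{x_i}^H$, matches. Thus the identities for $f$ restrict verbatim to identities for $f\circ j$, and $f\circ j$ is a homotopy moment map for the $H$-action on $(M,\omega)$.

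I do not expect any genuine obstacle here: the statement is a routine functoriality/naturality check, and the only point requiring a word of care is the identification of the fundamental vector field of $x\in\h$ under the $H$-action with $v_{j(x)}$ under the $G$-action, which follows from the compatibility of $j$ with the exponential maps of $H\subset G$.
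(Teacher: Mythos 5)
Your argument is correct and matches the paper's own proof: both rest on the observation that $j$, being a Lie algebra morphism, is an $L_\infty$-morphism (so $f\circ j$ is one by composition), and that the condition \eqref{eq:mom}, holding for all $x\in\g$, holds in particular for $x\in\h$ once the fundamental vector fields are identified. Your additional explicit substitution into \eqref{main_eq_1}--\eqref{main_eq_2} is just an unwinding of the same point, so there is nothing to add.
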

\begin{proof} The Lie algebra morphism $j$ is in particular an $L_{\infty}$-morphism, so
$f \circ j$ also is. Since eq. \eqref{eq:mom} holds for all $x\in \g$, in particular it holds for all $x\in \h$.
\end{proof}

\begin{lemma}
\label{lemma:NenMGinvariant} 
Let $N\overset{i}{\hookrightarrow} M$ a $G$-invariant submanifold of $M$. Then
the action $G\circlearrowleft \left(N,i^{\ast}\omega\right)$ is Hamiltonian with homotopy moment map $i^{\ast} \circ f: \g\to L_{\infty}\left(N,i^{\ast}\omega\right)$.
\end{lemma}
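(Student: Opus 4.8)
The plan is to show that the maps $g_k := i^{\ast}\circ f_k\colon \wedge^k\g\to \Omega^{n-k}(N)$, $k=1,\dots,n$, assemble into a homotopy moment map for the restricted action, by pulling back along $i$ the defining identities satisfied by $f$. Rather than trying to produce an $L_{\infty}$-morphism $L_{\infty}(M,\omega)\to L_{\infty}(N,i^{\ast}\omega)$ and precompose with $f$ (which would \emph{not} work, see the last paragraph), I would argue directly on the level of the components $f_k$.

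First I would record the elementary compatibilities in play. Since $N$ is $G$-invariant, the restricted $G$-action on $N$ is defined, and its fundamental vector fields are the restrictions $v_x^N\in\mathfrak{X}(N)$ of the fundamental vector fields $v_x$ of the $G$-action on $M$; in particular every $v_x$ is tangent to $N$. Moreover $i^{\ast}$ commutes with the de Rham differential, is multiplicative for the wedge product, and for any $\sigma\in\Omega(M)$ and vector fields $w_1,\dots,w_k\in\mathfrak{X}(M)$ tangent to $N$ one has $i^{\ast}\big(\iota(w_1\wedge\dots\wedge w_k)\sigma\big)=\iota(w_1^N\wedge\dots\wedge w_k^N)\,i^{\ast}\sigma$. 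Finally the restricted action preserves $i^{\ast}\omega$, since $\psi_g^{\ast}i^{\ast}\omega=i^{\ast}\psi_g^{\ast}\omega=i^{\ast}\omega$, so $(N,i^{\ast}\omega)$ is pre-$n$-plectic with a $G$-action and $L_{\infty}(N,i^{\ast}\omega)$ is defined.

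Next I would check that $g_1$ has the correct target and satisfies \eqref{eq:mom}: $d\,g_1(x)=i^{\ast}d f_1(x)=i^{\ast}(-\iota_{v_x}\omega)=-\iota_{v_x^N}i^{\ast}\omega$, which simultaneously exhibits $g_1(x)$ as a Hamiltonian $(n-1)$-form on $(N,i^{\ast}\omega)$ with Hamiltonian vector field $v_x^N$ — so $g_1$ indeed lands in degree $0$ of $L_{\infty}(N,i^{\ast}\omega)$ — and is precisely equation \eqref{eq:mom} for $g$. Then I would obtain \eqref{main_eq_1} and \eqref{main_eq_2} for the $g_k$ by applying $i^{\ast}$ termwise to the corresponding identities for the $f_k$ (which hold since $f$ is a homotopy moment map): the Chevalley--Eilenberg bracket terms become the bracket terms for $g$ by definition of $g_{k-1}$, the term $df_k(\dots)$ becomes $d\,g_k(\dots)$, and the term $\vs(k)\iota(v_{x_1}\wedge\dots\wedge v_{x_k})\omega$ becomes $\vs(k)\iota(v_{x_1}^N\wedge\dots\wedge v_{x_k}^N)\,i^{\ast}\omega$ by the compatibility above, all $v_{x_i}$ being tangent to $N$. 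By \cite[Prop. 3.8]{FRZ}, i.e. equations \eqref{main_eq_1}--\eqref{main_eq_2}, this shows $i^{\ast}\circ f$ is a homotopy moment map. Alternatively, the same argument can be phrased through Prop. \ref{prop:doubleprimitive}: $i^{\ast}$ induces a morphism of double complexes $\big(\wedge^{\ge1}\g^{\ast}\otimes\Omega(M),\dw\big)\to\big(\wedge^{\ge1}\g^{\ast}\otimes\Omega(N),\dw\big)$, and $i^{\ast}\widetilde{\omega}=\widetilde{i^{\ast}\omega}$ because the $v_x$ are tangent to $N$; a $\dw$-primitive $\prim$ of $\widetilde{\omega}$ on $M$ then pulls back to a $\dw$-primitive $i^{\ast}\prim$ of $\widetilde{i^{\ast}\omega}$ on $N$, whose associated homotopy moment map has components $\vs(k)\,i^{\ast}\prim_k=i^{\ast}f_k$.

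I do not expect a serious obstacle. The only point requiring care is conceptual rather than computational: $i^{\ast}$ is \emph{not} a strict $L_{\infty}$-morphism $L_{\infty}(M,\omega)\to L_{\infty}(N,i^{\ast}\omega)$ in general, because the pullback of an arbitrary Hamiltonian $(n-1)$-form on $M$ need not be Hamiltonian on $N$ (its Hamiltonian vector field need not be tangent to $N$). What makes the lemma work is that the Hamiltonian vector fields relevant to a homotopy moment map are exactly the fundamental vector fields $v_x$, which are tangent to $N$ by $G$-invariance; hence one must argue with the explicit components $f_k$, not by composing with an $L_{\infty}$-morphism.
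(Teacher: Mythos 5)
Your proof is correct, but it takes a genuinely different route from the paper's. You verify the moment-map identities \eqref{main_eq_1}--\eqref{main_eq_2} (or, in your alternative phrasing, the $\dw$-primitive condition of Prop.~\ref{prop:doubleprimitive}) component by component, pulling everything back along $i$ and using that the fundamental vector fields $v_x$ are tangent to $N$; this is sound and quite economical. The paper instead packages the same tangency observation structurally: it introduces the $L_{\infty}$-subalgebra $L^{N}(M,\omega)\subset L_{\infty}(M,\omega)$ whose degree-zero part consists of Hamiltonian forms admitting a Hamiltonian vector field tangent to $N$, checks that $f$ takes values in it (again because $v_x$ is tangent to $N$), and observes that the pullback $i^{\ast}\colon L^{N}(M,\omega)\to L_{\infty}(N,i^{\ast}\omega)$ is a \emph{strict} $L_{\infty}$-morphism, so that $i^{\ast}\circ f$ is a composition of $L_{\infty}$-morphisms. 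In particular your closing claim that one \emph{must} argue with the explicit components rather than by composing with an $L_{\infty}$-morphism is too strong: the naive pullback on all of $L_{\infty}(M,\omega)$ indeed fails to be an $L_{\infty}$-morphism, for exactly the reason you give (and the paper notes this in a footnote), but composition does work once one restricts to the subalgebra $L^{N}(M,\omega)$ — your diagnosis is precisely the paper's motivation for introducing it. Your approach buys brevity and makes the role of equations \eqref{main_eq_1}--\eqref{main_eq_2} transparent; the paper's buys a reusable structural statement (the subalgebra and the strict morphism $i^{\ast}$), at the modest cost of checking that the binary bracket preserves the tangency condition on Hamiltonian vector fields.
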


\begin{proof}
According to Def. \ref{def:homotopymoment}, we have to show that
\begin{equation*}
f^{N} := i^{\ast}\circ f :\mathfrak{g}\to L_{\infty}\left(N,i^{\ast}\omega\right)
\end{equation*}
is an $L_{\infty}$-morphism such that 
 \begin{equation}
\label{eq:conditionhomotopyN}
-\iota_{(v_{x})^N} i^{\ast}\omega = d f^{N}_{1} (x)\, ,\qquad\forall\,\, x\in \mathfrak{g},
\end{equation}
 where $(v_{x})^N$, which is a generator of the action on $N$, denotes the restriction of the vector field $v_x$ to $N$.

Eq. \eqref{eq:conditionhomotopyN} follows simply by applying the pullback $i^*$ to Eq. \eqref{eq:mom}. To show that $f^N$ is an $L_{\infty}$-morphism, let us introduce the following $L_{\infty}$-subalgebra of  $L_{\infty}\left(M,\omega\right)$:

\begin{equation*}
L^{N}\left(M,\omega\right) = C^{\infty}\left(M\right)\oplus\Omega^{1}\left(M\right)\oplus\cdots\oplus
\widetilde{\Omega}^{n-1}_{\mathrm{Ham}}\left(M\right)\, ,
\end{equation*}

\noindent
where 

$$
\widetilde{\Omega}^{n-1}_{\mathrm{Ham}}\left(M\right) = \left\{\alpha\in\Omega^{n-1}_{\mathrm{Ham}}\left(M\right): \text{$\exists$  a Hamiltonian vector field of $\alpha$ tangent to $N$}\right\}.
$$

\noindent
Since $L_{\infty}\left(M,\omega\right)$ and $L^{N}\left(M,\omega\right)$ are equal in every component except for the degree zero component, in order to see that $L^{N}\left(M,\omega\right)$ is really a $L_{\infty}$-subalgebra of $L_{\infty}\left(M,\omega\right)$, we only have to check that the binary bracket $l_{2}$ of $L_{\infty}\left(M,\omega\right)$ restricts to $\widetilde{\Omega}^{n-1}_{\mathrm{Ham}}\left(M\right)$. This is indeed the case since given any two Hamiltonian forms $\alpha$ and $\beta$ and respective Hamiltonian vector fields $v_{\alpha},v_{\beta}$, a Hamiltonian vector field for 
$l_2(\alpha,\beta)$ is given by
the Lie bracket
$\left[ v_{\alpha},v_{\beta}\right]$, which of course is   tangent to $N$
whenever both $v_{\alpha}$ and $v_{\beta}$ are.

Notice   that the homotopy moment map 
$f\maps \mathfrak{g}\to  L_{\infty}\left(M,\omega\right)$
 takes values in $L_{\infty}^{N}\left(M,\omega\right)$, that is,
\begin{equation}
\label{eq:fginLN}
f_{k}(x)\in L^{N}\left(M,\omega\right)\, ,\qquad \forall\,\, x\in\mathfrak{g}^{\otimes k}\qquad k\geq 1\, .
\end{equation}
To prove this, since $L_{\infty}\left(M,\omega\right)$ and $L^{N}\left(M,\omega\right)$ are equal in every component but the zero one, we have to check equation \eqref{eq:fginLN} only in the $k=1$ case, that is, we have to prove that 
\begin{equation*}
f_{1}(x)\in \widetilde{\Omega}^{n-1}_{\mathrm{Ham}}\left(M\right)\, ,\qquad \forall\,\, x\in\mathfrak{g}\, .
\end{equation*}
It holds since a Hamiltonian vector field of $f_{1}(x)$ is the generator of the action  $v_x$, which is tangent to $N$ by assumption.

Next, notice that the pullback of forms  
$$i^*\colon L_{\infty}^{N}\left(M,\omega\right)  \to L_{\infty}\left(N,i^*\omega\right)$$
is\footnote{However the map $L_{\infty} \left(M,\omega\right)  \to L_{\infty}\left(N,i^*\omega\right)$ given by
pullback of forms is not an  $L_{\infty}$-morphism. This is the reason we need to introduce   $L_{\infty}^{N}\left(M,\omega\right)$.} a (strict) $L_{\infty}$-morphism, as a consequence of the facts that $i^*$ commutes with the de Rham differential and 
due
to the definition of $\widetilde{\Omega}^{n-1}_{\mathrm{Ham}}\left(M\right)$.
 We conclude that
$i^{\ast} \circ f: \g\to L_{\infty}\left(N,i^{\ast}\omega\right)$ is a homotopy moment map.
\end{proof}

\subsection{Actions on  $( {M}, {\omega}\wedge {\omega})$}

 Let us consider two pre-multisymplectic manifolds $(M_{C},\omega_{C})\, ,\,\, C=a, b$. We assume that there is a Hamiltonian action of a Lie group $G_{C}\circlearrowleft M_{C}$ with corresponding homotopy moment map $f^{C} : \mathfrak{g}_{C}\to L_{\infty}\left(M_C,\omega_C\right)$.
 By Thm. \ref{thm:morphismproduct} we know that there is also a Hamiltonian action 

\begin{equation}
\label{eq:productaction}
{G_a}\times {G_b}\circlearrowleft \left(M_{a}\times M_{b},\, \omega_{a}\wedge\, \omega_{b}\right)\, ,
\end{equation}

\noindent
with homotopy moment map $F$ given 
by Thm. \ref{thm:morphismproduct}. 

Assume now that  $G_{a} = G_{b} =:{G}$, whose Lie algebra we denote by  ${\mathfrak{g}}$.
One can restrict the action \eqref{eq:productaction} to the diagonal $\Delta {G} = \{ (g, g)\, : \,\, g\in {G}\}$ of ${G}\times {G}$:

\begin{equation}\label{deltaG}
 \Delta {G} \circlearrowleft (M_{a}\times M_{b},\, \omega_{a}\wedge\, \omega_{b} )\, .
\end{equation}
By Lemma \ref{lem:restrict}, a homotopy moment map for this action is
\begin{equation*}
F\circ j :\Delta {\mathfrak{g}}\to L_{\infty}\left(M_{a}\times M_{b},\, \omega_{a}\wedge\, \omega_{b}\right)\, ,
\end{equation*}
where 
\begin{equation}\label{eq:m}
j\colon \Delta {\mathfrak{g}}=\{(x,x):x\in {\g}\} \to {\mathfrak{g}}\oplus{\mathfrak{g}}
\end{equation}
 is the inclusion.
By the isomorphism ${G}\simeq\Delta {G}, g\mapsto (g,g)$ we can view eq. \eqref{deltaG} as an action of the Lie group ${G}$,
and $j$ as a map $ {\mathfrak{g}}\simeq \Delta {\mathfrak{g}}  \to {\mathfrak{g}}\oplus{\mathfrak{g}}$.

%
%
%
%
%
%
%
 
Now we specialize even further, taking $\,M_{a} =M_{b} =: {M}\,$, $\omega_{a} = \omega_{b} =: {\omega}$ and $f^a=f^b$.  

%
%
%

\noindent
The diagonal $\Delta {M} $ of ${M}\times {M}$ is invariant under the action of $\Delta {G}$. Therefore, using the   inclusion

\begin{eqnarray*}
i : \Delta {M} \hookrightarrow {M}\times {M}\, 
\end{eqnarray*}
and the identification $ {M}\simeq \Delta {M}$
we obtain by restriction an action of ${G}$ on ${M}$:

\begin{equation*}
{G}\simeq\Delta {G}\circlearrowleft \left(\Delta {M},i^{\ast}\left({\omega}\wedge{\omega}\right)\right)\simeq \left({M},{\omega}\wedge{\omega}\right).
\end{equation*}
Of course, this is interesting only when $\omega$ has even degree, for otherwise ${\omega}\wedge{\omega}=0$.
Lemma \ref{lemma:NenMGinvariant} states that this action is Hamiltonian with homotopy moment map given by  
\begin{equation*}
i^{\ast}F \circ j : {\mathfrak{g}}\to L_{\infty}\left({M},{\omega}\wedge{\omega}\right)\, ,
\end{equation*}

\noindent
where $F$ is as in theorem \ref{thm:morphismproduct}.
 
\begin{remark}
If an action ${G}\circlearrowleft ({M},{\omega})$ is Hamiltonian, 
then the action ${G}\circlearrowleft ({M},{\omega}^{m})\, ,\, m\in\mathbb{N}\, ,$ is also Hamiltonian. This follows from a
 slight variation of the above reasoning, allowing $\omega_{a}$ and $\omega_{b}$ to be different.
\end{remark}
 
\begin{remark}
The above reasoning also leads to the following more general statement. Consider again, for
$C=a, b$,  actions $G_{C}\circlearrowleft M_{C}$ with corresponding homotopy moment maps $f^{C}$. Assume now that there is a manifold $B$ and $G_C$-equivariant submersions $\pi_C\colon M_C\to B$. Then the diagonal action of $G$ on the fiber product $M_a\times_B M_b=(\pi_a\times \pi_b)^{-1}(\Delta B)$, endowed with the pullback by the inclusion of $\omega_a\wedge \omega_b$, admits a homotopy moment map. 

The special case $M_a=M_b=B$ with $\pi_a=\pi_b=Id$ delivers $({M},{\omega}\wedge{\omega})$. Another interesting special case arises when $\pi_C\colon M_C\to B$ are principal $G_C$-bundles (in that case the action on $B$ is trivial).

\end{remark}
 
Making more explicit the formula for $i^{\ast}F \circ j$, we obtain:
\begin{prop}\label{prop:owedgeo}
 Let ${G}$ be a Lie group with Lie algebra $\g$, and fix an action of $G$ on an pre-$n$-plectic manifold $(M,\omega)$ with homotopy moment map $f\colon \g\to L_{\infty}(M,\omega)$, where $n$ is odd. 
 Then the $G$ action  on $({M}, {\omega}\wedge {\omega})$   has a homotopy moment map, with components ($k=1,\dots , 2n+1$)
 \begin{eqnarray*} 
 \mathfrak{g}^{\otimes k} &\to & L_{\infty}\left(M,\omega\wedge \omega \right)\,\nonumber\\ x^{1}\otimes \dots \otimes x^{k}  &\mapsto & 
2\sum_{m=1}^k \sum_{\sigma\in Sh_{m, k-m}}(-1)^{\sigma}c_{m,k-m}^a
f_{m}\left(x^{\sigma(1)}, \dots , x^{\sigma(m)}\right)\wedge \iota_{\sigma{(m+1)}, \dots ,\sigma(k)}\omega.
\end{eqnarray*}
 \end{prop}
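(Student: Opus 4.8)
The plan is to obtain Proposition \ref{prop:owedgeo} by directly computing $i^{\ast}F\circ j$, where $F$ is the homotopy moment map produced by Theorem \ref{thm:morphismproduct} applied to the case $M_a=M_b=M$, $\omega_a=\omega_b=\omega$, $\g_a=\g_b=\g$, $f^a=f^b=f$, and $n_a=n_b=n$. All three operations involved are known to preserve the homotopy moment map property (Theorem \ref{thm:morphismproduct}, Lemma \ref{lem:restrict}, Lemma \ref{lemma:NenMGinvariant}), so the content of the statement is purely the explicit formula; there is nothing left to check about the $L_\infty$-morphism equations.

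First I would precompose $F$ with the diagonal inclusion $j\colon\g\to\g\oplus\g$, $x\mapsto(x,x)$. Recall that the $k$-th component of an $L_\infty$-morphism pulled back along a strict morphism like $j$ is just $F_k$ evaluated on diagonal inputs; by graded skew-symmetry of $F_k$, feeding in $x^1\oplus x^1,\dots,x^k\oplus x^k$ and expanding each slot as a sum of a $\g_a$-part and a $\g_b$-part produces exactly a sum over all ways of choosing which inputs land in the $\g_a$ factor and which in the $\g_b$ factor. This is where the shuffle sum $\sum_{m}\sum_{\sigma\in Sh_{m,k-m}}(-1)^\sigma$ appears: for a fixed subset of size $m$ (with the induced order), the value is $F_k$ on $x_a$'s in those $m$ slots and $x_b$'s in the remaining $k-m$ slots, which by the boxed formula in Theorem \ref{thm:morphismproduct} equals $c^a_{m,k-m}f_m(\dots)\wedge\iota_{\dots}\omega_b + c^b_{m,k-m}\iota_{\dots}\omega_a\wedge f_{k-m}(\dots)$.

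Next I would apply the pullback $i^{\ast}$ along the diagonal $\Delta M\hookrightarrow M\times M$ and use the identification $\Delta M\cong M$. Under this identification both pulled-back copies $\omega_a$ and $\omega_b$ become $\omega$, the Hamiltonian vector fields $v_{f_1^a(\cdot)}$ and $v_{f_1^b(\cdot)}$ both restrict to $v_{f_1(\cdot)}$ tangent to the diagonal (this is precisely the tangency condition exploited in the proof of Lemma \ref{lemma:NenMGinvariant}), and the wedge product of a form pulled back from the first factor with one pulled back from the second becomes the ordinary wedge product on $M$. Hence the two terms $c^a_{m,k-m}f_m\wedge\iota\,\omega$ and $c^b_{m,k-m}\iota\,\omega\wedge f_{k-m}$ become comparable. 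The key combinatorial point is that, after relabelling $m\leftrightarrow k-m$ and the shuffle $\sigma$, the $c^b$-term for the splitting $(m,k-m)$ matches (up to sign) the $c^a$-term for the splitting $(k-m,m)$ — using $n$ odd so that the degrees of $f_m$ and $\iota_{\dots}\omega$ let the wedge be commuted with a controlled Koszul sign — so the two families of terms combine into a single sum with an overall factor of $2$ and coefficient $c^a_{m,k-m}$. I would verify the sign bookkeeping by plugging into $c^a_{m,l}=\frac12\vs(m+l)\vs(m)(-1)^{(n_a+1-m)l}$ and $c^b_{m,l}=\frac12\vs(m+l)\vs(l)(-1)^{(n_a+1-m)(l+1)}$ with $n_a=n$ odd, i.e. $n+1$ even, which kills several of the $(-1)$ exponents and makes the identification transparent; the degenerate cases $m=0$ and $m=k$ (where $c^a_{k,0}=1$, $c^b_{0,k}=(-1)^{(k+1)(n+1)}=1$ since $n+1$ is even) fit the same pattern and account for the endpoint terms of the shuffle sum.

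The main obstacle I anticipate is exactly this sign reconciliation: showing that the $a$-term and the $b$-term of $F$ collapse into twice a single term requires carefully tracking the Koszul sign incurred when swapping $f_m(\dots)\wedge\iota_{\dots}\omega$ past $\iota_{\dots}\omega\wedge f_{k-m}(\dots)$, the sign $(-1)^\sigma$ of the reversing unshuffle, and the prefactors $c^a,c^b$ simultaneously — and confirming that the hypothesis that $n$ is odd is precisely what makes all these conspire. Once that identity is in hand the proposition follows immediately, so I would present the argument as: (i) diagonal precomposition yields the shuffle sum of boxed $F_k$ formulae; (ii) restriction to $\Delta M$ identifies the geometric data; (iii) a sign computation using $n$ odd merges the two terms into $2\sum_m\sum_\sigma(-1)^\sigma c^a_{m,k-m}f_m(\dots)\wedge\iota_{\dots}\omega$, which is the claimed formula.
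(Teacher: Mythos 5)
Your proposal is correct and takes essentially the same route as the paper's own proof: expand the diagonal inputs $j(x^1)\wedge\dots\wedge j(x^k)$ into the $2^k$ monomials indexed by which slots carry the index $a$, evaluate each with the boxed formula of Thm.~\ref{thm:morphismproduct}, restrict along $i\colon\Delta M\hookrightarrow M\times M$, and pair each monomial with its $a\leftrightarrow b$-swapped partner so that (using $n$ odd, i.e.\ $n+1$ even) the $c^b$-summand of one coincides with the $c^a$-summand of the other, yielding the overall factor $2$ and the coefficient $c^a_{m,k-m}$. The paper likewise reduces the matter to this pairing and leaves the final sign verification as a routine check, so your outline matches it in substance.
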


\begin{remark}  The above double sum consist of $2^k-1$ summands.
\end{remark}
\begin{proof} Fix $k\ge 1$ and $x^1\wedge\dots\wedge x^k\in \wedge^k \g$. Notice that 
\begin{equation*}
j(x^1)\wedge\dots\wedge j(x^k)\in \wedge^k(\g\oplus \g)
\end{equation*}
 is the sum of $2^k$ monomials in a natural way. For instance, introducing the notation $j(x)=x_a\oplus x_b$, one has
$j(x^1)\wedge j(x^2)=x_a^1\wedge x_a^2
+ x_a^1\wedge x_b^2
+x_b^1\wedge x_a^2
+x_b^1\wedge x_b^2$.
Let $X$ denote   one of these monomials,  let $m$ be 
the number of  elements in $X$ decorated by the index ``$a$'', and $l:=k-m$.  
If $m=0$ or $l=0$, it is clear by Thm. \ref{thm:morphismproduct} that  $(i^*(F_k))(X)=F(X)\wedge \omega$.

Hence we consider only the case that $m,l\neq 0$. 
$X$ can be   written as $$(-1)^{\sigma}x_a^{\sigma(1)}\wedge\dots\wedge x_a^{\sigma(m)}\wedge x_b^{\sigma(m+1)}\wedge\dots\wedge x_b^{\sigma(k)}$$ for a unique $\sigma\in Sh_{m, l}$. 
 By Thm. \ref{thm:morphismproduct} we have
\begin{align}\label{eq:iotafk}
F_k(X)=(-1)^{\sigma}&\Big[c^{a}_{m,l}\,f_{m}\left(x^{\sigma(1)}_{a}, \dots , x^{\sigma(m)}_{a}\right)\wedge \iota_{\sigma(m+1), \dots ,\sigma(k)}\omega\\
& + c^{b}_{m,l}\,\iota_{\sigma(1), \dots ,\sigma(m)}\omega  \wedge f_{l}\left(x^{\sigma(m+1)}_{b}, \dots , x^{\sigma(k)}_{b}\right)\Big].\nonumber
\end{align}
Denote by $Y$ the monomial   obtained from $X$ interchanging each index ``$a$'' with the index ``$b$''. Notice that 
$Y$ can be  written as $$(-1)^{\tau}x_a^{\tau(1)}\wedge\dots\wedge x_a^{\tau(l)}\wedge x_b^{\tau(l+1)}\wedge\dots\wedge x_b^{\tau(k)}$$ for a unique $\tau\in Sh(l,m)$.
One can check that the first summand of
$ F_k (X)$ in eq. \eqref{eq:iotafk} agrees exactly with the second summand of $F_k(Y)$. Hence
\begin{align*}(i^*(F_k))(X+Y)=2&\Big[(-1)^{\sigma}c^{a}_{m,l}\,f_{m}\left(x^{\sigma(1)}, \dots , x^{\sigma(m)}\right)\wedge \iota_{\sigma(m+1), \dots ,\sigma(k)}\omega\\
+&\;\;(-1)^{\tau}c^{a}_{l,m}\,f_{m}\left(x^{\tau(1)}, \dots , x^{\tau(l)}\right)\wedge \iota_{\tau(l+1), \dots ,\tau(k)}\omega\Big].\nonumber
\end{align*}
Pairing two by two as above all the summands of $m(x^1)\wedge\dots\wedge m(x^k)$ and summing up, we see that $(i^*(F_k)\circ j)(x^1\wedge\dots\wedge x^k)$ equals the expression given in the statement of this proposition.
\end{proof}

Not all the homotopy  moment maps for  $({M}, {\omega}\wedge {\omega})$ arise from homotopy moment maps for $({M}, {\omega})$ as in Prop. \ref{prop:owedgeo}, as the following example shows.

\begin{ep}\label{ex:noowo} Consider the symplectic manifold
$M:=S^1\times S^1\times S^1\times \RR$ with canonical ``coordinates'' $\theta_1,\theta_2,\theta_3,x_4$, and 
  symplectic form
$\omega=
d\theta_1\wedge d\theta_2+d\theta_3\wedge dx_4$.
The action of the circle on $M$ with generator $\frac{\partial}{\partial \theta_1}$ is by symplectomorphisms, but does not admit a moment map since $d\theta_2$ is not exact.
 
 On the other hand $\omega\wedge\omega=
 2d\theta_1\wedge d\theta_2\wedge d\theta_3\wedge dx_4$
 is exact with invariant primitive (for instance, as primitive take $-2x_4d\theta_1\wedge d\theta_2\wedge d\theta_3$). Therefore by \cite[\S 8]{FRZ} there is a homotopy moment maps for $\omega\wedge\omega$,
constructed canonically using this primitive. 
\end{ep}


\subsection{Hyperk\"ahler manifolds} 


The results in this subsection are closely related to Martin Callies' results in \cite{Callies}.

\begin{defi}
A {\bf Hyperk\"ahler manifold}  is a  Riemannian manifold $(M,g)$ equipped with three complex structures $J_{i}: TM\to TM\, , i=1,2,3\, ,$   which satisfy the quaternionic relations $J^{2}_{i}= J_{1}J_{2}J_{3} = -1$ and are covariantly constant with respect to the Levi-Civita connection $\nabla$ associated to $g$, that is,  $\nabla J_{i} = 0\, , i=1,2,3\, .$ We say then that $\left( g, J_{1}, J_{2}, J_{3}\right)$ is a Hyperk\"ahler structure on $M$.
\end{defi}

\noindent
As a consequence of the definition of Hyperk\"ahler manifold, $M$ is also equipped with three symplectic two-forms $\omega_{i}\, , i=1,2,3\, ,$ as follows

\begin{equation*}
\omega_{i}\left(u,v\right) = g\left( J_{i}u,v\right)\,  ,\quad u,v \in\mathfrak{X}(M)\, ,\quad i=1,2,3\, .
\end{equation*}

\begin{remark}
\noindent
Notice that $\omega_{i}$ is non-degenerate as a consequence of $g$ and $J$ being non-degenerate and it is closed as a consequence of $J_{i}$ being covariantly constant. In fact, we have
$\nabla\omega_{i} = 0$ for $i=1,2,3$.

\noindent
If $a_{i}\in\mathbb{R}\, , i=1,2,3\, ,$ with $\sum^{3}_{i=1}a^2_{i}=1$, then $\sum^{3}_{i=1}a_{i} J_{i}$ is a complex structure un $M$ , and $g$ is K\"ahler respect to it, with K\"ahler form $\sum^{3}_{i=1}a_{i} \omega_{i}$. Hence, a Hyperk\"ahler manifold $M$ is equipped with a \emph{sphere} of complex structures and K\"ahler forms.

A Hyperk\"ahler manifold can be also characterized as a $4k$-dimensional (real) Riemannian manifold with Riemannian holonomy contained in   $Sp(k)$, where $k\geq 1$. Since $Sp(k)\subset SU(2k)$, every Hyperk\"ahler manifold is Calabi-Yau and Ricci-flat. Notice that the natural representation of $Sp(k)$ on   $\RR^{4k}$ preserves three complex structures $J_{i}\, , i=1,2,3\, ,$ that satisfy the quaternionic relations $J^{2}_{i}= J_{1}J_{2}J_{3} = -1$.
%
%
\end{remark}

It turns out that  $$\Omega:=\sum_{i=1}^3\omega_i\wedge \omega_i$$
is a 3-plectic form. 

The following Lemma follows immediately from Def. \ref{def:homotopymoment} using Eq. \eqref{main_eq_1} and \eqref{main_eq_2} (or alternatively from Prop. \ref{prop:doubleprimitive}).

 \begin{lemma}\label{lem:sum}
Suppose we are given an action of a Lie group $H$ on a manifold $N$ preserving pre-$n$-plectic forms $\Omega_1$ and $\Omega_2$, with homotopy moment maps $F^1$ and $F^2$ respectively. Then the action of $H$ on $(N,\Omega_1+\Omega_2)$ has homotopy moment map $F^1+F^2$.
\end{lemma}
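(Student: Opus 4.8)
The plan is to set $F := F^1 + F^2$, that is, to define $F_k := F^1_k + F^2_k \colon \wedge^k \h \to \Omega^{n-k}(N)$ for $k = 1,\dots,n$, where $\h$ denotes the Lie algebra of $H$, and to check directly that this is a homotopy moment map for the action of $H$ on $(N,\Omega_1+\Omega_2)$. First I would observe that $F$ is well defined: since $\Omega_1$ and $\Omega_2$ are both pre-$n$-plectic, the components $F^1_k$ and $F^2_k$ lie in the same space $\Omega^{n-k}(N)$, so their sum makes sense and has the correct degree.

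Next I would verify the moment map condition \eqref{eq:mom}. The crucial point — and essentially the only one — is that the vector field $v_x$ contracted in \eqref{eq:mom} is the infinitesimal generator of the $H$-action and depends only on the action, not on the chosen closed form. Hence
\[
d F_1(x) \;=\; d f^1_1(x) + d f^2_1(x) \;=\; -\iota_{v_x}\Omega_1 - \iota_{v_x}\Omega_2 \;=\; -\iota_{v_x}(\Omega_1+\Omega_2),
\]
which in particular shows that $F_1(x)$ is Hamiltonian for $\Omega_1+\Omega_2$ with Hamiltonian vector field $v_x$, so that $F_1$ indeed takes values in degree $0$ of $L_{\infty}(N,\Omega_1+\Omega_2)$.

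Then I would check the $L_{\infty}$-morphism equations \eqref{main_eq_1} and \eqref{main_eq_2}. Their left-hand sides are $\RR$-linear in the collection $(f_j)_j$; their right-hand sides consist of $d F_k(x_1,\dots,x_k)$, which is additive, together with the term $\vs(k)\,\iota(v_{x_1}\wedge\cdots\wedge v_{x_k})\,\omega$, which — again because the $v_{x_i}$ are the fixed action generators — splits as the sum of the corresponding terms for $\Omega_1$ and for $\Omega_2$. Adding the instance of \eqref{main_eq_1} (resp.\ \eqref{main_eq_2}) satisfied by $F^1$ with $\omega = \Omega_1$ to the one satisfied by $F^2$ with $\omega = \Omega_2$ therefore produces exactly the equation for $F$ with $\omega = \Omega_1+\Omega_2$. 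Equivalently, as the statement points out, one may invoke Prop.\ \ref{prop:doubleprimitive}: the operation $\sigma \mapsto \widetilde{\sigma}$ is linear by \eqref{eq:tild}, so if $\prim^C \in K$ are $\dw$-primitives of $\widetilde{\Omega_C}$ corresponding to $F^C$, then $\prim^1+\prim^2$ is a $\dw$-primitive of $\widetilde{\Omega_1+\Omega_2}$, and the components $\vs(k)(\prim^1_k+\prim^2_k) = F^1_k + F^2_k$ it determines are those of a homotopy moment map for $(N,\Omega_1+\Omega_2)$.

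I do not expect a genuine obstacle here: the entire content is that both sides of the defining equations depend additively on the pair (closed form, candidate moment map). The one subtlety worth spelling out is that the contracting vector fields appearing in \eqref{eq:mom}, \eqref{main_eq_1} and \eqref{main_eq_2} are the infinitesimal generators $v_x$ of the $H$-action, which are unchanged when $\Omega_1$ is replaced by $\Omega_2$ or by $\Omega_1+\Omega_2$; it is precisely this that makes the $\iota(\cdots)\omega$ terms additive in $\omega$.
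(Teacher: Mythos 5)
Your proposal is correct and follows exactly the routes the paper itself indicates: the paper offers no detailed proof, stating only that the lemma follows immediately from Def.~\ref{def:homotopymoment} via the additivity of Eqs.~\eqref{main_eq_1}--\eqref{main_eq_2} (the vector fields $v_x$ depending only on the action), or alternatively from Prop.~\ref{prop:doubleprimitive}, and you verify both. Nothing is missing.
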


\begin{prop}
Let $G$ be a Lie group acting on the Hyperk\"ahler manifold $M$. Assume that
$(M,\omega_i)$ admits an equivariant moment map $f^i$, for $i=1,2,3$.
Then the $G$ action on the 3-plectic manifold $(M,\Omega)$ admits
a homotopy moment map, constructed canonically out of $f^1,f^2,f^3$.
\end{prop}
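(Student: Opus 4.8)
The plan is to reduce the statement to tools already established in this note. The form $\Omega=\sum_{i=1}^{3}\omega_i\wedge\omega_i$ is a sum of three wedge-squares $\omega_i\wedge\omega_i$, each of which is a closed (in fact $3$-plectic) $4$-form on $M$. Since for each $i$ the group $G$ acts on $(M,\omega_i)$ preserving $\omega_i$ and admitting an (equivariant) moment map $f^i\colon\g\to C^{\infty}(M)=L_{\infty}(M,\omega_i)$, we may apply the diagonal/iterated-power construction of \S\ref{section:iterated} to each factor. Concretely, Proposition \ref{prop:owedgeo} (with $n=1$, which is odd, so its hypothesis is met) produces for each $i$ a homotopy moment map, call it $H^i\colon\g\to L_{\infty}(M,\omega_i\wedge\omega_i)$, built canonically out of $f^i$; its components are the explicit $2^k-1$-term expressions in $f^i_m$ and contractions of $\omega_i$ given there, and in particular $H^i$ is a homotopy moment map for the $G$-action on the pre-$3$-plectic manifold $(M,\omega_i\wedge\omega_i)$.

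Next I would invoke Lemma \ref{lem:sum}: since $G$ acts on $M$ preserving each of the three pre-$3$-plectic forms $\Omega_i:=\omega_i\wedge\omega_i$ with homotopy moment map $H^i$, the action of $G$ on $(M,\Omega_1+\Omega_2+\Omega_3)=(M,\Omega)$ has homotopy moment map $H^1+H^2+H^3$. (Lemma \ref{lem:sum} is stated for two summands, but it iterates immediately to three, or one can apply it twice.) This $H^1+H^2+H^3$ is manifestly constructed canonically out of $f^1,f^2,f^3$, which is exactly the assertion of the proposition; the fact that $\Omega$ is $3$-plectic (stated just before Lemma \ref{lem:sum}) is not even needed for the existence statement, only the pre-$3$-plectic property, so the argument goes through verbatim in the "pre" setting.

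There is essentially no obstacle, since every ingredient is already in place. The only point requiring a word of care is degree bookkeeping: Proposition \ref{prop:owedgeo} is phrased for $(M,\omega\wedge\omega)$ with a single form $\omega$, so one must observe that its proof (via Lemmas \ref{lem:restrict} and \ref{lemma:NenMGinvariant} applied to the diagonal inclusions $\Delta G\hookrightarrow G\times G$ and $\Delta M\hookrightarrow M\times M$) used nothing beyond having a homotopy moment map on each factor of a product $(M\times M,\omega\wedge\omega)$; here the two factors carry the \emph{same} manifold and group but the \emph{same} form $\omega_i$, so it applies directly. Should one prefer, one can bypass Proposition \ref{prop:owedgeo} entirely and go straight through Theorem \ref{thm:morphismproduct} (constructing a moment map for $(M\times M,\omega_i\wedge\omega_i)$ from $f^i$ on both factors) followed by the restriction Lemmas \ref{lem:restrict} and \ref{lemma:NenMGinvariant} to the diagonals, and then Lemma \ref{lem:sum}; the resulting homotopy moment map for $(M,\Omega)$ is the same and is visibly canonical in $(f^1,f^2,f^3)$.
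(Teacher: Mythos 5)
Your proposal is correct and follows essentially the same route as the paper: apply Proposition \ref{prop:owedgeo} (with $n=1$) to each $f^i$ to obtain a homotopy moment map for $(M,\omega_i\wedge\omega_i)$, then combine the three by Lemma \ref{lem:sum}. The additional remarks (iterating the two-summand lemma, or bypassing Prop. \ref{prop:owedgeo} via Thm. \ref{thm:morphismproduct} and the restriction lemmas) are accurate but not needed beyond what the paper itself does.
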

\begin{proof}
Since $f^i$ is a moment map for $\omega_i$, Prop. \ref{prop:owedgeo}
provides a homotopy moment map $F^i$ for $\omega_i\wedge \omega_i$, for $i=1,2,3$. A homotopy moment map for $\Omega$ is then given by $F^1+F^2+F^3$,
by Lemma \ref{lem:sum}.
\end{proof}

Not all homotopy moment maps for $\Omega$ arise from moment maps for the $\omega_i$, as the following variation of Ex. \ref{ex:noowo} shows.
\begin{ep}
Consider the Hyperk\"ahler manifold $\RR^4$ with the canonical metric and the complex structures $J_1,J_2,J_3$ given by quaternionic multiplication by $i,j,k\in \mathbb{H}=\RR^4$.
Dividing by the lattice $\ZZ^3\times \{0\}$ we obtain a Hyperk\"ahler structure on $M:=S^1\times S^1\times S^1\times \RR$ (the product of the 3-torus with the real line),
on which we have induced ``coordinates'' $\theta_1,\theta_2,\theta_3,x_4$.
The  symplectic structures on $M$ associated to the distinguished complex structures are 
$$\omega_1=
d\theta_1\wedge d\theta_2+d\theta_3\wedge dx_4,\;\;\;\;
\omega_2=d\theta_1\wedge d\theta_3-d\theta_2\wedge dx_4,\;\;\;\;\;
\omega_3=d\theta_1\wedge dx_4+d\theta_2\wedge d\theta_3.$$
The action of the circle on $M$ with generator $\frac{\partial}{\partial \theta_1}$
preserves each $\omega_i$, however  $\omega_1$ and $\omega_2$
have no moment map for this action.
On the other hand, it is easily computed that $\Omega:=\sum_{i=1}^3\omega_i\wedge \omega_i=6d\theta_1\wedge d\theta_2\wedge d\theta_3\wedge dx_4$, and $\Omega$ admits a homotopy moment map as we explained in 
Ex. \ref{ex:noowo}. 
\end{ep}
 

\section{Embeddings of $\li$-algebras associated to closed differential forms}\label{section:emb}


Let $(M_C,\omega_C)$ be a pre-$n_C$-plectic manifold, $C=a\, , b$. 
We consider the  pre-$n_a+n_b+1$-plectic manifold
\begin{equation*}
\left( M\equiv M_a\times M_b,\omega\equiv\omega_a\wedge\omega_b\right)\, .
\end{equation*}
Being $(M_C,\omega_C)$ a pre-$n_{C}$-plectic manifold, it is equipped with a Lie $n_{C}$- algebra $L_{\infty}(M_{C},\omega_{C})$, 
 constructed exclusively out of $\omega_{C}$ and the de Rahm differential $d$. The purpose of this section is to find an $L_{\infty}$-morphism 
\begin{equation}\label{eq:Hto}
 H\colon L_{\infty}(M_a,\omega_a)\oplus L_{\infty}(M_b,\omega_b) \rightsquigarrow L_{\infty}(M_a\times M_b,\omega_a\wedge \omega_b)
\end{equation}
whose first component is an embedding. We will exhibit such a morphism in Thm. \ref{thm:embed}.  

 \begin{remark}
 As in the previous section, we will slightly abuse notation, denoting a differential form on $M_C$ and its pullback to $M_a\times M_b$, via  the canonical projection, by the same symbol. Similarly, given a vector field on $M_C$, we denote by the same symbol its horizontal lift to the product manifold $M_a\times M_b$.
 
 Further, we denote by $l^a$ and $l^b$ the multi-brackets of 
 $L_{\infty}(M_a,\omega_a)$ and  $L_{\infty}(M_b,\omega_b) $ respectively, and by $l$ 
 the multi-brackets of 
 $L_{\infty}(M,\omega)$. 
\end{remark}

\subsection{The construction of $H$ and its properties}

The source of $H$ is
$L_{\infty}(M_a,\omega_a)\oplus L_{\infty}(M_b,\omega_b)$, which, being a direct sum of 
$L_{\infty}$-algebras,
 is itself an $L_{\infty}$-algebra. We spell this out, assuming $n_{b}\geq n_{a}$. The underlying complex is
$$  
C^{\infty}(M_{b})\to \cdots\to C^{\infty}(M_{a})\oplus \Omega^{n_{b}-n_{a}}(M_{b})  
\to \cdots\to \Omega^{n_{a}-1}(M_{a})\oplus \Omega^{n_{b}-1}(M_{b}).
$$ 
Its  multibrackets $l^{ab}_{k}$ (for $k\geq 1$) are defined by
\begin{equation*}
l^{ab}_{k}\left(\alpha_{1} \oplus\beta_{1} ,\dots , \alpha_{k} \oplus\beta_{k}\right) = l^{a}_{k}\left(\alpha_{1},\dots , \alpha_{k}\right) \oplus l^{b}_k\left(\beta_{1},\dots ,\beta_{k}\right)
\end{equation*}
 where $\alpha_{1}\oplus\beta_{1},\dots , \alpha_{k}\oplus\beta_{k} \in L_{\infty}(M_a,\omega_a)\oplus L_{\infty}(M_b,\omega_b)$.
 Notice that $L_{\infty}(M_a,\omega_a)\oplus L_{\infty}(M_b,\omega_b)$ is a Lie $N$-algebra, where $N:=Max\{n_a,n_b\}$, while  $L_{\infty}(M,\omega)$ - the target of $H$ -  is a Lie $(n_a+n_b+1)$-algebra. \\


We now argue that there is a natural candidate for the  first component of an  $L_{\infty}$-morphism as in \eqref{eq:Hto}. 
Given $\alpha \in \ham{n_a-1}{M_a}$ and $\beta \in \ham{n_b-1}{M_b}$, take Hamiltonian vector fields $X_{\alpha }$ and $X_{\beta}$ for them, and 
consider  $X_{\alpha } +   X_{\beta}$ on $M_a\times M_b$. It is again a Hamiltonian vector field, since
\begin{equation*}
\iota_{(X_{\alpha} +   X_{\beta})}\omega = - d\left[  \alpha \wedge \omega_b +  \omega_{a}\wedge \beta\right].
\end{equation*}
\noindent
Hence there is a well-defined map 
\begin{eqnarray*}
h: \Omega^{n_{a}-1}_{\mathrm{Ham}}\left(M_{a}\right)\oplus \Omega^{n_{b}-1}_{\mathrm{Ham}}\left(M_{b}\right) &\to &  \Omega^{n_{a}+n_{b}}_{\mathrm{Ham}}\left(M\right)\nonumber\\
\alpha \oplus \beta &\mapsto &  \alpha \wedge \omega_{b} +  \omega_{a}\wedge \beta\, .
\end{eqnarray*}

\noindent
Endow $\Omega^{n_{a}-1}_{\mathrm{Ham}}\left(M_{a}\right)\oplus \Omega^{n_{b}-1}_{\mathrm{Ham}}\left(M_{b}\right)$ with the bracket $l^{ab}_2$, i.e., the
sum of the binary brackets 
$l_2^a$ and $l_2^b$ on the two factors. Denoting all   binary brackets by $\{\cdot,\cdot\}$ to ease the notation, we have

\begin{equation*}
h\Big(\Big\{\alpha_{1}\oplus \beta_{1},\alpha_{2}\oplus \beta_{2}\Big\}\Big) =
\Big\{ h(\alpha_{1}\oplus\beta_{1})\;,\; h(\alpha_{2}\oplus\beta_{2})\Big\} + (-1)^{n_a} d\left[ \alpha_{1}\wedge  d\beta_{2} - \alpha_{2}\wedge  d\beta_{1}\right]\, . 
\end{equation*}

\noindent
That is, $h$ does not preserve the binary brackets on the nose, but just  up to an exact term. This a characteristic feature of the  {first component} of an $L_{\infty}$-morphism. Indeed, in  Thm. \ref{thm:embed} we extend $h$ to an
$L_{\infty}$-morphism from $L_{\infty}(M_a,\omega_a)\oplus L_{\infty}(M_b,\omega_b)$ to $L_{\infty}(M,\omega)$. 
  The concrete expression   of the  $L_{\infty}$-morphism 
is  motivated by the results of Section \ref{sec:Momentmapproduct} and in particular by Theorem \ref{thm:morphismproduct}.\\

We will use the square brackets  introduced in Def. \ref{def:square}, for $C= a,b$. Recall that $[\dots]^C_k$ is defined for all $k\ge 0$, and that it vanishes unless all entries have degree zero (i.e., are Hamiltonian forms). Recall also that 
$\left[\,1\right]^{C}_{0} = -\omega_{C}$ and that for $k\ge 1$,
by Remark \ref{prop:iotaomega},
  \begin{equation*}
[\alphadk{k}]^C_k=\{\alphadk{k}\}_{C}-\delta_{k,1}d_{C}\alpha_1\, ,\quad C= a, b\, ,
\end{equation*}
where $\{\alphadk{k}\}_{C}$ is the $k$-bracket of $L_{\infty}(M_{C},\omega_{C})$ and $d_{C}$ is the de Rahm differential on $M_{C}$.  

\begin{thm}\label{thm:embed}
Let $ (M_C,\omega_C)$ be pre-$n_{C}$-plectic manifolds. There is an $\li$-morphism 

\begin{equation*}
H\colon L_{\infty}(M_a,\omega_a)\oplus L_{\infty}(M_b,\omega_b) \rightsquigarrow L_{\infty}(M_a\times M_b,\omega_a\wedge \omega_b)
\end{equation*}
whose first component is injective.
The components of $H$ will be denoted by $H_l$ ($l\ge 1$). They are determined by graded skew-symmetry and the requirement that
  \begin{center}
\fbox{
\begin{Beqnarray*}
H_{k+m}(\alpha_1,\dots,\alpha_k,\beta_1,\dots,\beta_m)=&
 t^{a}_{m,|\alpha_{1}|}\delta_{k,1}\alpha_1\wedge [\beta_1,\dots,\beta_m]^{b}_{m}\\ +& \, t^{b}_{k,|\beta_{1}|}\delta_{m,1}[\alpha_1,\dots,\alpha_k]^{a}_{k}\wedge\beta_1\, ,
\end{Beqnarray*}
}
\end{center}
where $k+m\ge 1$, $\alpha_1,\dots,\alpha_k\in L_{\infty}(M_a,\omega_a)$, $\beta_1,\dots,\beta_m\in L_{\infty}(M_b,\omega_b)$, $\left[\,1\;\right]^{C}_{0} = -\omega_{C}$ and 
the coefficients are, for all $i\leq 0$:
\begin{align}
\label{eq:s}
t^{a}_{m,i} &= -\frac{1}{2}\left(-1\right)^{m(n_{a}+1+i)}\, , \qquad m\geq 1\\
t^{b}_{k,i} &= -\frac{1}{2}\left(-1\right)^{i(n_{a}+1)+k}\,\,\, , \qquad k\geq 1\,\nonumber 
\end{align}
and
$$t^{a}_{0,i} = -1,\;\;\;\;\;\;\; t^{b}_{0,i} = -\left(-1\right)^{i(n_{a}+1)}.$$

\noindent
\newline
Above, $\delta$ denotes the Kronecker delta, and  $|\alpha_{1}|$ refers to the degree\footnote{This  differs by $n_a-1$ from the degree of $\alpha_1$ as a differential form.} of $\alpha_1$  as an element of $L_{\infty}(M_a,\omega_a)$.
\end{thm}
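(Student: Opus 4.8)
The plan is to verify directly that the proposed components $H_l$ satisfy the defining identities of an $L_\infty$-morphism from $L_{\infty}(M_a,\omega_a)\oplus L_{\infty}(M_b,\omega_b)$ to $L_{\infty}(M,\omega)$. Recall that the target $L_{\infty}(M,\omega)$ is a Lie $(n_a+n_b+1)$-algebra whose higher brackets (for $k\ge 2$) are concentrated in degree zero, while its unary bracket is $d$; hence the $L_{\infty}$-morphism relations reduce to the usual system of ``main equations'' (of the shape of \eqref{main_eq_1}, \eqref{main_eq_2}) where the contributions come only from (i) the de Rham differential applied to $H_p$, (ii) the precomposition of $H_p$ with the source brackets $l^{ab}_q$, and (iii) the single term $l_q(H_1(-),\dots,H_1(-))$ which is nonzero only when all the $H_1$-entries lie in degree zero. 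The first step is therefore to write out, for each total arity $N$, this relation explicitly, using the description of $l^{ab}$ as the direct sum of $l^a$ and $l^b$ and the explicit formula for $H$.

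\textbf{Key steps.} First I would record the elementary algebraic facts about the sign functions: $\vs(m)\vs(l)\vs(m+l)=-(-1)^{ml}$, the behaviour of the coefficients $t^a_{m,i},t^b_{k,i}$ under shifting $i$ by $1$ and under $m\mapsto m\pm1$, and the Koszul signs produced when moving a form on $M_a$ past a form on $M_b$ in the wedge product (these are exactly the sign manipulations already seen in the proof of Thm.~\ref{thm:morphismproduct}). Second, I would split the verification according to the bidegree $(k,m)$ of the input $(\alpha_1,\dots,\alpha_k,\beta_1,\dots,\beta_m)$, noting that since $H_{k+m}$ vanishes unless $k\le 1$ or $m\le 1$ (because of the Kronecker deltas), only the cases $k=0$, $m=0$, $k=1$, $m=1$ contribute to any given $L_\infty$-relation, and in particular only inputs with at most one ``$a$''-entry and at most one ``$b$''-entry that is \emph{not} of degree zero can yield a nonzero term. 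Third, I would treat the ``pure'' cases first: inputs entirely in $L_{\infty}(M_a,\omega_a)$ (resp. $L_{\infty}(M_b,\omega_b)$), where the relation should collapse to the statement that $\alpha\mapsto \alpha\wedge\omega_b$ (resp. $\beta\mapsto \omega_a\wedge\beta$) together with the higher $[\dots]^C$ assembles into an $L_\infty$-morphism onto its image — this is essentially the content of Def.~\ref{def:square} and Remark~\ref{prop:iotaomega} packaged via the ``curved'' brackets, combined with the observation that $[1]_0^C=-\omega_C$ makes $\omega_C$ behave like the arity-zero bracket. Fourth, I would handle the genuinely mixed case: one $\alpha$-entry and one or more $\beta$-entries (and the symmetric situation), where the de Rham differential hitting $H_{1+m}(\alpha,\beta_1,\dots,\beta_m)=t^a_{m,|\alpha|}\alpha\wedge[\beta_1,\dots,\beta_m]^b_m$ produces both $d\alpha\wedge(\cdots)$ and $\pm\alpha\wedge d[\beta_\bullet]^b_m$, and the relation $[\beta_1,\dots,\beta_m]^b_m = \{\beta_1,\dots,\beta_m\}_b - \delta_{m,1}d_b\beta_1$ together with the source $L_\infty$-identities for $L_{\infty}(M_b,\omega_b)$ must be used to match these against the precomposition terms and the $l(H_1,\dots,H_1)$ term. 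Finally, injectivity of $H_1$ is immediate: on degree $n_a-1$ (resp. $n_b-1$) forms, $H_1$ sends $\alpha\oplus 0\mapsto -\alpha\wedge\omega_b$ (up to the explicit nonzero scalar $t^a_{0,i}=-1$) and $0\oplus\beta\mapsto -(-1)^{i(n_a+1)}\omega_a\wedge\beta$, and wedging a form on $M_a$ with the fixed form $\omega_b$ pulled back from $M_b$ is injective (and similarly on the other factor, and on the lower-degree components $H_1$ is $\pm$ the analogous wedge).

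\textbf{Main obstacle.} The hard part will be the mixed arity-$N$ relation with $N\ge 3$: there one must simultaneously juggle (a) the shuffle sums over ways of grouping the $a$-entry with some $\beta$'s inside an inner bracket, (b) the fact that the inner bracket $l^{ab}_i$ applied to a mix of one $\alpha$ and several $\beta$'s already vanishes unless the $\alpha$ survives alone or all $\beta$'s have degree zero, and (c) the precise Koszul and $\vs$-signs so that the ``$d$ of $H$'' terms, the ``$H$ of source-bracket'' terms, and the single degree-zero term $\vs(N)\iota(\cdots)\omega_a\wedge\omega_b$ cancel. I expect this to reduce, after the dust settles, to the source $L_\infty$-identity for $L_{\infty}(M_b,\omega_b)$ (resp. $M_a$) wedged with a fixed form, so the real work is the bookkeeping of signs — exactly the kind of computation already carried out, in a parallel form, in the proof of Thm.~\ref{thm:morphismproduct}, which is why the coefficients $t^C$ were reverse-engineered from the coefficients $c^C$ there. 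A convenient way to organize the whole argument, and the one I would adopt, is to first establish Prop.~\ref{prop:curved} (the curved $L_\infty$-structure with brackets $[\dots]_k$), observe that $H$ as written is essentially built from the curved brackets of $M_a$ and $M_b$ wedged against each other's curvature forms, and then deduce the $L_\infty$-morphism property of $H$ from a ``wedge of curved $L_\infty$-algebras'' statement; but absent that, the direct sign-chase above is self-contained.
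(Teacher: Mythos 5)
Your proposal matches the paper's own proof in both strategy and structure: the paper verifies the general $L_\infty$-morphism identities directly, with exactly the reduction you describe (only the terms $l_1\circ H_N$, $H\circ l^{ab}$, and $l_N(H_1,\dots,H_1)$ survive, by the degree-zero concentration of the higher brackets and the Kronecker deltas in $H$), the same case split by bidegree $(k,m)$, and the delicate mixed cases $(1,m>1)$ and $(k>1,1)$ handled — just as you anticipate — by the identity $\sum_{p<q}(-1)^{p+q}[l_2^b(\beta_p,\beta_q),\dots]_{m-1}=d[\beta_1,\dots,\beta_m]_m$ (the heart of the source $L_\infty$-identity, via \cite[Lemma 9.2]{FRZ}) wedged with a fixed form, together with the factorization Lemma \ref{lem:square} (the Hamiltonian vector field of $\alpha\wedge\omega_b$ with respect to $\omega_a\wedge\omega_b$ is the lift of $v_\alpha$), which your sketch leaves implicit in the ``sign bookkeeping.'' The only inessential differences are that the paper treats the coefficients as an ansatz to be solved (with your hard cases appearing as the overdetermined consistency check) rather than verifying the given values, and that it does not use the curved-$L_\infty$ repackaging you mention as an optional organization.
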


\begin{remark}\label{rem:onlyone}
Notice that $H$, applied to a family of elements lying in $(L_{\infty}(M_a,\omega_a)\oplus\{0\})\cup (\{0\}\oplus L_{\infty}(M_b,\omega_b))$, vanishes unless: either exactly one element is of the form $\alpha\oplus 0$ and the remaining elements have degree zero, or exactly one element is of the form $0\oplus \beta$ and the remaining elements have degree zero.
\end{remark}

\begin{remark}
The first component $H_1$ is clearly injective for it is given by
$$H_1(\alpha)=\alpha\wedge\omega_b \;\;\;\;\; \text{ and }\;\;\;\;\;
H_1(\beta)=
\left(-1\right)^{|\beta|(n_{a}+1)}\omega_a\wedge\beta,$$ where $\alpha \in L_{\infty}(M_a,\omega_a)$ and $\beta\in L_{\infty}(M_b,\omega_b)$.

The restriction of $H_1$ to $L_{\infty}(M_a,\omega_a)\oplus\{0\}$ is a strict morphism. This can be seen using Remark \ref{rem:onlyone}, since the higher components of $H$ vanish if all entries lie in $L_{\infty}(M_a,\omega_a)\oplus\{0\}$, or alternatively it can be seen directly using Lemma \ref{lem:square} below. The same holds for the restriction of $H$ to $\{0\}\oplus L_{\infty}(M_b,\omega_b)$.
\end{remark}

\begin{remark}
{Recall that the composition $\psi \circ \phi$ of two $L_{\infty}$-morphisms is given  by
$(\psi \circ \phi)_{k}=\sum_{l=1}^k\sum_{k_1+\dots +k_l=k}\pm\psi_l\circ (\phi_{k_1}\otimes\dots\otimes \phi_{k_l})$.} Possibly up to signs, the $\li$-morphism $H$ given  Thm. \ref{thm:embed} has the following property:
for any action of a Lie group $G_C$ on $(M_C,\omega_C)$ with homotopy  moment map $f^C$ ($C=a,b$), one has $$F=H\circ (f^a\oplus f^b),$$ where $F$ is the homotopy moment map constructed in Thm. \ref{thm:morphismproduct} out of $f^a$ and $f^b$.
In other words, the diagram \eqref{diag:productmoment} commutes. 
\end{remark}

\begin{ep}
Let $n_a=n_b=1$. That is, $\left(M_{a},\omega_{a}\right)$ and $\left(M_{b},\omega_{b}\right)$ are pre-symplectic manifolds, and so  $\left(M,\omega\right)$ is a pre-$3$-plectic manifold. Consequently, the cochain complex $L$ underlying the Lie 3-algebra  $L_{\infty}\left(M,\omega\right)$ is 
\begin{equation*}
 C^{\infty}\left(M\right)\to\Omega^{1}\left(M\right)\to\Omega^{2}\left(M\right)\to \Omega^{3}_{\mathrm{Ham}}\left(M\right)\,.
\end{equation*}

\noindent
On the other hand, $L_{\infty}\left(M_{a},\omega_{a}\right)\oplus L_{\infty}\left(M_{b},\omega_{b}\right) = C^{\infty}\left(M_{a}\right)\oplus C^{\infty}\left(M_{b}\right)$  is just a Lie-algebra. The higher components of the $L_{\infty}$-embedding of theorem \ref{thm:embed} read 
 
\begin{equation*}
\mathrm{H}_{2}\left( f_{a}\oplus f_{b} , g_{a}\oplus g_{b}\right) =\frac{1}{2} \left( f_{a}\wedge dg_{b} - df_{a}\wedge g_{b} - g_{a}\wedge df_{b} + dg_{a}\wedge f_{b}\right)\, ,
\end{equation*}

\begin{eqnarray*}
\mathrm{H}_{3}\left( f_{a}\oplus f_{b} , g_{a}\oplus g_{b}, h_{a}\oplus h_{b}\right) = \frac{1}{2}\left(f_{a}\left\{g_{b},h_{b}\right\}_{2} + f_{b}\left\{g_{a},h_{a}\right\}_{2} - g_{a}\left\{f_{b},h_{b}\right\}_{2} \right.\\ \left. - g_{b}\left\{f_{a},h_{a}\right\}_{2} + h_{a}\left\{f_{b},g_{b}\right\}_{2} + h_{b}\left\{f_{a},g_{a}\right\}_{2}\right)\, ,
\end{eqnarray*}

\noindent
for all $f_{C}, g_{C}, h_{C} \in C^{\infty}\left(M_{C}\right)\, ,\,\, C =a,b\, .$ Notice that since $L_{\infty}\left(M_{a},\omega_{a}\right)\oplus L_{\infty}\left(M_{b},\omega_{b}\right)$ is a   Lie algebra, we can use formulae \eqref{main_eq_1} and \eqref{main_eq_2} to double-check that $H$ is indeed an $L_{\infty}$-morphism.  
\end{ep} 

\subsection{The proof}

We now turn to the  proof of Thm. \ref{thm:embed}. We will use repeatedly the following Lemma.

\begin{lemma}\label{lem:square}
For all $\alpha_{1},\ldots,\alpha_{k}\in L_{\infty}(M_a,\omega_a)$ and $\beta_{1},\ldots,\beta_{m}\in L_{\infty}(M_b,\omega_b)$, where $k,m \ge 0$ and $k+m\ge 1$, we have
$$[\alpha_1\omega_b,\dots,\alpha_k\omega_b,\omega_a\beta_1,\dots,\omega_a\beta_m]_{k+m}=
-(-1)^{m(n_{a}+1)}[\alpha_1,\dots,\alpha_k]_{k}\wedge[\beta_1,\dots,\beta_m]_{m}\, .$$
\end{lemma}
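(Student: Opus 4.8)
\textbf{Proof strategy for Lemma \ref{lem:square}.}

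The plan is to compute both sides directly from Definition \ref{def:square}, which defines $[\dots]_k$ on $L_\infty(M,\omega)$ with $\omega = \omega_a\wedge\omega_b$. First I would dispose of the degenerate cases: if any $\alpha_i$ or $\beta_j$ has negative degree, then so do $\alpha_i\wedge\omega_b$ and $\omega_a\wedge\beta_j$ (their degrees as elements of $L_\infty(M,\omega)$ differ from their form degrees by $n_a+n_b-1$, just as on the factors), so both sides vanish by Definition \ref{def:square}; the same is true when $k+m\ge 2$ and not all entries are Hamiltonian of degree zero. Also the cases $k=0$ or $m=0$ reduce to $[1]_0 = -\omega = -\omega_a\wedge\omega_b$ matched against $\left[1\right]^C_0 = -\omega_C$, so I would treat those separately and then assume $k,m\ge 1$ with every $\alpha_i\in\ham{n_a-1}{M_a}$ and $\beta_j\in\ham{n_b-1}{M_b}$.

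In the main case, $\alpha_i\wedge\omega_b$ is a Hamiltonian $(n_a+n_b)$-form on $M$ with Hamiltonian vector field (the horizontal lift of) $X_{\alpha_i}$, where $d_a\alpha_i = -\iota_{X_{\alpha_i}}\omega_a$; indeed $d(\alpha_i\wedge\omega_b) = (d_a\alpha_i)\wedge\omega_b = -\iota_{X_{\alpha_i}}(\omega_a\wedge\omega_b)$ using that $\omega_b$ is closed and $\iota_{X_{\alpha_i}}\omega_b = 0$ (horizontal lift of a vector field on $M_a$ kills pullbacks of forms on $M_b$). Symmetrically $\omega_a\wedge\beta_j$ has Hamiltonian vector field $X_{\beta_j}$. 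So the left-hand side equals $\vs(k+m)\,\iota(X_{\alpha_1}\wedge\dots\wedge X_{\alpha_k}\wedge X_{\beta_1}\wedge\dots\wedge X_{\beta_m})(\omega_a\wedge\omega_b)$. Now I would expand this contraction of a product of forms: since each $X_{\alpha_i}$ annihilates $\omega_b$ and each $X_{\beta_j}$ annihilates $\omega_a$, the only surviving term in the Leibniz expansion of $\iota_v(\omega_a\wedge\omega_b)$ over the wedge product is the one contracting all the $X_\alpha$'s into $\omega_a$ and all the $X_\beta$'s into $\omega_b$, producing $\bigl(\iota(X_{\alpha_1}\wedge\dots\wedge X_{\alpha_k})\omega_a\bigr)\wedge\bigl(\iota(X_{\beta_1}\wedge\dots\wedge X_{\beta_m})\omega_b\bigr)$ up to a Koszul sign from moving the $X_\beta$'s past $\omega_a$ — here $\omega_a$ has degree $n_a+1$, and we reorder $m$ interior products past it, giving $(-1)^{m(n_a+1)}$. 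Then $\iota(X_{\alpha_1}\wedge\dots\wedge X_{\alpha_k})\omega_a = \vs(k)^{-1}[\alpha_1,\dots,\alpha_k]^a_k$ and similarly for $b$, so the left side becomes $\vs(k+m)\vs(k)^{-1}\vs(m)^{-1}(-1)^{m(n_a+1)}[\alpha_1,\dots,\alpha_k]^a_k\wedge[\beta_1,\dots,\beta_m]^b_m$, and $\vs(k+m)\vs(k)^{-1}\vs(m)^{-1} = -(-1)^{km} \cdot (\text{sign})$ — I would use the identity $\vs(k)\vs(m)\vs(k+m) = -(-1)^{km}$ quoted after Theorem \ref{thm:morphismproduct}, together with $\vs(k)^2 = 1$, and check whether the extra $(-1)^{km}$ is absorbed: in fact it should cancel against a sign coming from the ordering convention in the definition of $\iota(v_1\wedge\dots\wedge v_k)$ versus iterated interior products, so the net statement is as claimed. (I expect the cleanest bookkeeping is to keep $\iota(v_1\wedge\dots)$ as a single symbol throughout and only invoke $\vs(k)\vs(m)\vs(k+m)=-(-1)^{km}$ once at the end.)

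The main obstacle is purely the sign bookkeeping: tracking the Koszul sign when interior products by horizontal lifts are reordered past $\omega_a$ and $\omega_b$, and reconciling the combinatorial factor $\vs(k+m)$ against $\vs(k)\vs(m)$. I would handle this by fixing once and for all the convention $\iota(v_1\wedge\dots\wedge v_j) = \iota_{v_j}\cdots\iota_{v_1}$ (as in Definition \ref{def:multisymLinfinity} and \eqref{eq:omegak}), verifying the two-form-factor expansion $\iota(v_1\wedge\dots\wedge v_{k+m})(\eta\wedge\zeta)$ when the first $k$ vectors kill $\zeta$ and the last $m$ kill $\eta$ — which is exactly the kind of formula already used implicitly in Lemma \ref{lemma:dtotprim} via $\widehat{\omega_a}\widehat{\omega_b} = \widehat{\omega_a\wedge\omega_b}$ — and then substituting. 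Everything else is immediate from Definition \ref{def:square} and the Hamiltonian vector field computation above.
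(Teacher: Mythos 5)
Your overall route is the same as the paper's: reduce to the case where all entries have degree zero, observe that (the horizontal lift of) $X_{\alpha_i}$ is a Hamiltonian vector field for $\alpha_i\wedge\omega_b$ with respect to $\omega_a\wedge\omega_b$ (and symmetrically for $\omega_a\wedge\beta_j$), split the contraction of $\omega_a\wedge\omega_b$ into a product of contractions of the two factors, and finish with $\vs(k)\vs(m)\vs(k+m)=-(-1)^{km}$. The degenerate cases ($k=0$ or $m=0$, entries of negative degree) are disposed of exactly as you indicate.

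The one place where your write-up does not close is the crucial Koszul sign, and since the entire content of the lemma is this sign it has to be pinned down. With the convention $\iota(v_1\wedge\dots\wedge v_j)=\iota_{v_j}\cdots\iota_{v_1}$ that you fix, contracting first by the $X_{\alpha_i}$ turns $\omega_a\wedge\omega_b$ into $\bigl(\iota(X_{\alpha_1}\wedge\dots\wedge X_{\alpha_k})\omega_a\bigr)\wedge\omega_b$, whose first factor has degree $n_a+1-k$, not $n_a+1$; each subsequent $\iota_{X_{\beta_j}}$ therefore produces a sign $(-1)^{n_a+1-k}$, so the correct splitting is
\[
\iota(X_{\alpha_1}\wedge\dots\wedge X_{\beta_m})(\omega_a\wedge\omega_b)=(-1)^{m(n_a+1-k)}\,\iota(X_{\alpha_1}\wedge\dots\wedge X_{\alpha_k})\omega_a\wedge\iota(X_{\beta_1}\wedge\dots\wedge X_{\beta_m})\omega_b,
\]
which is the formula the paper's proof invokes. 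The extra $(-1)^{mk}$ relative to your claimed $(-1)^{m(n_a+1)}$ is exactly what cancels the $(-1)^{km}$ in $\vs(k)\vs(m)\vs(k+m)=-(-1)^{km}$, yielding the stated coefficient $-(-1)^{m(n_a+1)}$. In particular, the cancellation you hope to extract from an ``ordering convention in $\iota(v_1\wedge\dots\wedge v_j)$ versus iterated interior products'' is not available: once the convention is fixed as above there is no further sign from that source, and the missing factor $(-1)^{mk}$ comes instead from the degree drop of $\omega_a$ under the first $k$ contractions. With that correction your argument is complete and coincides with the paper's proof.
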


\begin{proof} 
We may assume that all the $\alpha$ and $\beta$ have degree zero, for otherwise the equation is trivially satisfied. It is straightforward to verify that 
the Hamiltonian vector field of $\alpha\omega_b$ (w.r.t $\omega_a\wedge \omega_b$) equals the Hamiltonian vector field $X_{\alpha}$ of $\alpha$ (w.r.t $\omega_a$), and the exactly analogous statement holds for  $\omega_a\beta$. The statement of the lemma follows from
$$\iota(X_{\alpha_1}\wedge X_{\alpha_2}\wedge \dots \wedge X_{\beta_m})(\omega_a\wedge \omega_b)=(-1)^{m(n_{a}+1-k)}
\iota(X_{\alpha_1}\wedge   \dots \wedge X_{\alpha_k})\omega_a\wedge
\iota(X_{\beta_1}\wedge   \dots \wedge X_{\beta_m})\omega_b$$
together with the identity
$\vs(k)\vs(m)\vs(k+m)=-(-1)^{km}$. 
\end{proof}

According to the conditions that an $L_{\infty}$-morphism has to obey (see for instance \cite[Def. 2.4]{WurzRyvkinMomaps}), we have to check  that the following relation holds for all $N\in\mathbb{N}_{>0}$ and for all $\vec{x}=(x_{1},\ldots,x_{N})\in \left(L_{\infty}\left(M_{a},\omega_{a}\right)\oplus L_{\infty}\left(M_{b},\omega_{b}\right)\right)^{\otimes N}$: 
\begin{align}
\label{eq:Linfinityconditions}
& \sum_{i+j=N+1}(-1)^{i(j-1)}\sum_{\sigma\in \mathrm{Sh}_{i,j-1}}(-1)^{\sigma}\epsilon(\sigma, \vec{x}
)\,H_{j}\left(l^{ab}_{i}\left( x_{\sigma(1)},\ldots,x_{\sigma(i)}\right) ,x_{\sigma(i+1)}%
,\ldots,x_{\sigma(N)}\right)\\
  = &\sum_{\ell=1}^{N}\sum_{\substack{N_{1}+\cdots+N_{\ell}=N\\N_{1}%
\leq\cdots\leq N_{\ell}}}(-1)^{\gamma(\ell,\vec{N})}\sum_{\sigma\in Sh_{N_{1},\ldots,N_{\ell}}^{<}}%
(-1)^{\sigma}\epsilon(\sigma, \vec{x}
)
\epsilon\left(\rho,\vec{H}\right)\, \nonumber \\
&\;\;\;
l_{\ell}\left( H_{N_{1}}(x_{\sigma(1)},\ldots,x_{\sigma
(N_{1})}),\ldots, H_{N_{\ell}}(x_{\sigma(N-N_{\ell}+1)},\ldots,x_{\sigma
(N)})\right).\, \nonumber
\end{align}
Here

\begin{itemize}

\item 
$\gamma(\ell,\vec{N})\equiv \frac{\ell(\ell -1)}{2} + N_{1} (\ell-1)+ N_{2} (\ell-2) +\dots + N_{\ell-1}$.

\item $Sh_{N_{1},\ldots,N_{\ell}}^{<}\subset Sh_{N_{1},\ldots,N_{\ell}}$ is the set of $(N_{1},\ldots,N_{\ell})$-unshuffles such that
\[
\sigma(N_{1}+\cdots+N_{i-1}+1)<\sigma(N_{1}+\cdots+N_{i-1}+N_{i}+1)\text{\quad
whenever }N_{i}=N_{i+1}.
\]
\item $\vec{H} = \left(H_{N_{1}},\ldots,H_{N_{\ell}},x_{\sigma(1)},\ldots,x_{\sigma(N)}\right)$ and $\rho$ is the permutation of $\left\{1,\ldots,\ell+N\right\}$ sending $\vec{H}$ to 
$
\left(H_{N_{1}},x_{\sigma(1)},\ldots,x_{\sigma(N_{1})},\ldots,H_{N_{\ell}},x_{\sigma(N-N_{\ell}+1)},\ldots,x_{\sigma(N)}\right)\, .
$
\end{itemize}

\noindent
As usual, $(-1)^{\sigma}$ denotes the  sign of the permutation $\sigma$ and $\epsilon(\sigma, \vec{x})$ denotes the Koszul  sign.
 
\begin{remark}\label{rem:signsLRHS}
Notice that on the l.h.s. of eq. \eqref{eq:Linfinityconditions}, the sign of the summand corresponding to $i=N,j=1$ is $+1$ (since the only permutation appearing is the identity).

On the r.h.s., the sign of the summand corresponding to $l=N$ is   $+1$. Indeed $N_1=\dots=N_l=1$, so that   $\gamma(\ell,\vec{N})=+1$, $\sigma=id$, and all $H_{N_i}$ have degree zero. Further, the sign of the summand corresponding to $\ell=1$ is also $+1$, since $\gamma(1,\vec{N})=+1$, $\sigma=id$ and $\rho=id$.
\end{remark}

\begin{proof}[Proof of Thm. \ref{thm:embed}]
Let $C = a, b$. We first check that $H_{j}$ has degree $1-j$. For $j=1$ this is clear. For $j=k+m\ge 2$, we use that $[\dots]^C_m$, as an operation on $L_{\infty}(M_C,\omega_C)$, has degree $2-m$. Hence, for instance, if the   elements $\alpha_1,\beta_1,\dots,\beta_m$ all have degree zero, then $H_{1+m}(\alpha_1,\beta_1,\dots,\beta_m)=\pm\frac{1}{2}\alpha_1[\beta_1,\dots,\beta_m]^{b}_{m}$ is the product of a $n_a-1$ and $(n_b-1)+(2-m)$ form, that is, a $n_a+n_b-m$ form, which therefore is an element of $L_{\infty}(M_a\times M_b,\omega_a\wedge \omega_b)$ of degree $-m = 1-(1+m) = 1-j\, .$

The rest of the proof is devoted to checking that $H$ is an $L_{\infty}$-morphism.
Our strategy is as follows.
 We  propose an educated ansatz for $H$
 depending on some arbitrary parameters and then we will impose on it the $L_{\infty}$-morphism conditions \eqref{eq:Linfinityconditions}. Equations \eqref{eq:Linfinityconditions}  will turn out to be an over-determined system of equations for the parameters of the ansatz, and we will show that a solution is given by eq. \eqref{eq:s}.

The ansatz is the following:  for the first component of $H$,
 $$H_1(\alpha)=s^{a}_{0,|\alpha|}\alpha\wedge(-\omega_b),\;\;\;\;\;\;\; H_1(\beta)=s^{b}_{0,|\beta|} (-\omega_a)\wedge\beta.$$
For the higher components of $H$, i.e. for   $k+m\ge 2$,
  $H_{k+m}(\alpha_1,\dots,\alpha_k,\beta_1,\dots,\beta_m)$
equals 

\begin{equation}
\label{eq:Hgeneralansatz}
\frac{s^{a}_{m,|\alpha_{1}|}}{2}\delta_{k,1}\alpha_1\wedge[\beta_1,\dots,\beta_m]^{b}_{m} + \frac{s^{b}_{k,|\beta_{1}|}}{2}\delta_{m,1}[\alpha_1,\dots,\alpha_k]^{a}_{k}\wedge\beta_1\, ,
\end{equation}
where
 $\alpha_{1},\ldots,\alpha_{k}\in L_{\infty}(M_a,\omega_a)$ and $\beta_{1},\ldots,\beta_{m}\in L_{\infty}(M_b,\omega_b)$ are homogeneous elements of their respective graded spaces. Here $s^{a}_{m,|\alpha_{1}|}$ depends on the number of $\beta$'s and the degree of $\alpha_{1}$. It cannot depend on the number of $\alpha$'s since if there is more than one the corresponding term in \eqref{eq:Hgeneralansatz} is zero, and it cannot depend on the degree of the $\beta$'s since if $\left|\beta_{1}\otimes\cdots\otimes\beta_{m}\right|<0$ then the corresponding term in  \eqref{eq:Hgeneralansatz} is again zero. A similar discussion applies to $s^{b}_{k,|\beta_{1}|}$. 
 
We now  apply condition \eqref{eq:Linfinityconditions} to our ansatz for $H$ and elements $\alpha_1,\dots,\alpha_k,\beta_1,\dots,\beta_m$.
 We are going to consider six different cases depending on $k$ and $m$, namely $\left\{ k\geq 1, m = 0\right\}$, $\left\{ k = 0, m \geq  1\right\}$,  $\left\{ k = 1, m = 1\right\}$, $\left\{ k > 1, m > 1\right\}$, $\left\{ k = 1, m > 1\right\}$ and $\left\{ k > 1, m = 1\right\}$. We will use repeatedly Remark \ref{rem:onlyone} and the fact that for $i\ge 2$ the multibrackets $l_i$ vanish unless all entries have degree zero.


\bigskip
 \noindent{\bf Case $\mathbf{\left\{ k\geq 1, m = 0\right\}}$.}
 
This case will allow us to calculate $s^{a}_{0,i},\, i\leq 0$. The condition \eqref{eq:Linfinityconditions} evaluated on $\alpha_{1},\ldots,\alpha_{k}\in L_{\infty}(M_a,\omega_a)$ reads
\begin{equation}
\label{eq:conditionk0}
H_{1}\left(l^{a}_{k}\left(\alpha_{1},\ldots,\alpha_{k}\right)\right) = l_{k}\left(H_{1}\left(\alpha_{1}\right),\ldots,H_{1}\left(\alpha_{k}\right)\right)\, ,
\end{equation}
as one sees using Rem. \ref{rem:onlyone}, together with Remark \ref{rem:signsLRHS} to determine the signs.

\noindent
Using now that  

\begin{eqnarray*}
H_{1}\left(l^{a}_{k}\left(\alpha_{1},\ldots,\alpha_{k}\right)\right) &=& -s^{a}_{0,2-k + |\alpha|}\, l^{a}_{k}\left(\alpha_{1},\ldots,\alpha_{k}\right)\wedge \omega_{b} \, , \qquad \\l_{k}\left(H_{1}\left(\alpha_{1}\right),\ldots,H_{1}\left(\alpha_{k}\right)\right) &=& (-s^{a}_{0,|\alpha_1|} )\dots  (-s^{a}_{0,|\alpha_k|})\, l^{a}_{k}\left(\alpha_{1},\ldots,\alpha_{k}\right)\wedge \omega_{b}\, ,
\end{eqnarray*}

\noindent
where $\left|\alpha\right| = \left|\alpha_{1}\otimes\cdots\otimes\alpha_{k}\right|$ and using  Lemma \ref{lem:square} in the second equation when $k\ge 2$, we conclude that 
%
%
we can choose $s^{a}_{0,i} = -1$ for all $i\leq 0$.

\bigskip
 \noindent {\bf Case $\mathbf{\left\{ k = 0, m \geq 1\right\}}$.}

 This case will allow as to calculate $s^{b}_{0,i},\, i\leq 0$. The condition \eqref{eq:Linfinityconditions} evaluated on $\beta_{1},\ldots,\beta_{m}\in L_{\infty}(M_b,\omega_b)$, similarly to the case above, reads 

\begin{equation}
\label{eq:condition0m}
H_{1}\left(l^{b}_{m}\left(\beta_{1},\ldots,\beta_{m}\right)\right) = l_{m}\left(H_{1}\left(\beta_{1}\right),\ldots,H_{1}\left(\beta_{m}\right)\right)\, .
\end{equation}

\noindent
Using now that

\begin{eqnarray*}
H_{1}\left(l^{b}_{m}\left(\beta_{1},\ldots,\beta_{m}\right)\right) &=& -s^{b}_{0,2-m + |\beta|}\, \omega_{a}\wedge l^{b}_{m}\left(\beta_{1},\ldots,\beta_{m}\right)\, , \qquad \\l_{m}\left(H_{1}\left(\beta_{1}\right),\ldots,H_{1}\left(\beta_{m}\right)\right) &=& (-s^{b}_{0,|\beta_1|} )\dots  (-s^{b}_{0,|\beta_m|})
(-1)^{m(n_{a}+1)}\, \omega_{a}\wedge l^{b}_{m}\left(\beta_{1},\ldots,\beta_{m}\right)\, ,
\end{eqnarray*}

\noindent
where $\left|\beta\right| = \left|\beta_{1}\otimes\cdots\otimes\beta_{m}\right|$ and using  Lemma \ref{lem:square} in the second equation when $m\ge 2$, we conclude (taking $m=1$) that equation \eqref{eq:conditionk0} implies
$s^{b}_{0,1 +|\beta|} = (-1)^{(n_{a}+1)} s^{b}_{0,|\beta|}$
and therefore 
$s^{b}_{0,i} = \left(-1\right)^{i(n_{a}+1)}s^{b}_{0,0}\, ,\quad i\leq 0$.
Plugging this into into eq. \eqref{eq:condition0m} it can be easily verified that 
eq. \eqref{eq:condition0m} is solved by
 \begin{equation*}
s^{b}_{0,i} = -\left(-1\right)^{i(n_{a}+1)}\, ,\quad i\leq 0\, .
\end{equation*}


\bigskip
 \noindent{\bf Case $\mathbf{\left\{ k = 1, m = 1\right\}}$.}

 This case will allow as to find $s^{a}_{1,i}$ and $s^{b}_{1,i}$ for $i\leq 0$. The condition \eqref{eq:Linfinityconditions} evaluated on $\alpha,\beta$, where $\alpha\in L_{\infty}(M_a,\omega_a)$ and $\beta\in L_{\infty}(M_b,\omega_b)$, reads 

\begin{equation}
\label{eq:condition11}
-H_{2}\left(l^{a}_{1}(\alpha),\beta\right) -(-1)^{|\alpha|} H_{2}\left(\alpha,l^{b}_{1}(\beta)\right) = l_{1}\left(H_{2}(\alpha,\beta)\right)+l_{2}\left(H_{1}(\alpha),H_{1}(\beta)\right)\, .
\end{equation}
(The l.h.s. corresponds to the summand $i=1,j=2$ in \eqref{eq:Linfinityconditions}, and the signs for the r.h.s. follow from Remark \ref{rem:signsLRHS}.)
Recall that by ansatz \eqref{eq:Hgeneralansatz}, for all $A\in L_{\infty}(M_a,\omega_a)$ and $B\in L_{\infty}(M_b,\omega_b)$ we have
\begin{equation*}
H_{2}(A,B) = \frac{s^{a}_{1,|A|}}{2} A \wedge\left[B\right]^{b}_{1} + \frac{s^{b}_{1,|B|}}{2} \left[A\right]^{a}_{1} \wedge B.
\end{equation*}

In order to solve equation \eqref{eq:condition11} we have to analyze the different cases in terms of the degree of $\alpha$ and $\beta$. If $|\alpha| = |\beta| = 0$ the l.h.s. of \eqref{eq:condition11} is zero 
%
%
while the r.h.s. is
\begin{equation*}
-\frac{s^{a}_{1,0}}{2} d\alpha\wedge d\beta -(-1)^{n_{a}} \frac{s^{b}_{1,0}}{2} d\alpha\wedge d\beta  +(-1)^{n_{a}} d\alpha\wedge d\beta \,,
\end{equation*}

\noindent
as one sees using Lemma \ref{lem:square}. Hence we can take
 \begin{equation}
\label{eq:sasb10}
s^{a}_{1,0}  = (-1)^{n_{a}}\,,\;\;\;  s^{b}_{1,0}=1.
\end{equation}

\noindent
Now, if $|\alpha| = 0$ and $|\beta| < 0$  the first and fourth term in equation \eqref{eq:condition11} vanish, and that equation translates into

\begin{equation*}
-\frac{s^{b}_{1,|\beta|+1}}{2} \left[\alpha\right]^{a}_{1}\wedge l^{b}_{1}(\beta) = (-1)^{n_{a}}\frac{s^{b}_{1,|\beta|}}{2} \left[\alpha\right]^{a}_{1}\wedge l^{b}_{1}(\beta)\, ,
\end{equation*}

\noindent
implying that 
$s^{b}_{1,|\beta|+1} = (-1)^{n_{a}+1} s^{b}_{1,|\beta|}\, .$
Together with equation \eqref{eq:sasb10} this implies finally that 

\begin{equation*}
s^{b}_{1,i} = \left(-1\right)^{i(n_{a}+1)}\, , \qquad i\leq 0\, .
\end{equation*}

\noindent
By means of a completely analogous calculation for the case $|\alpha| < 0$ and $|\beta| = 0$ we obtain $s^{a}_{1,|\alpha|+1} = - s^{a}_{1,|\alpha|}$, so we can choose

\begin{equation*}
s^{a}_{1,i} = \left(-1\right)^{n_{a}+i}\, , \qquad i\leq 0\, .
\end{equation*}

\noindent
Lastly, the case $|\alpha|<0$ and $|\beta|<0$ is trivial since both sides of equation \eqref{eq:condition11} vanish.

\bigskip
 \noindent{\bf Case $\mathbf{\left\{ k > 1, m > 1\right\}}$.}

 This case will allow as to find $s^{a}_{k,i}$ and $s^{b}_{m,i}$ for $i\leq 0$ and $k,m>1$. The condition \eqref{eq:Linfinityconditions} evaluated on $(\alpha_{1},\ldots,\alpha_{k},\beta_{1},\ldots,\beta_{m})$, where $\alpha_{1},\ldots,\alpha_{k}\in L_{\infty}(M_a,\omega_a)$ and $\beta_{1},\ldots,\beta_{m}\in L_{\infty}(M_b,\omega_b)$, reduces to 

\begin{align}
\label{eq:conditionm1m1}
& (-1)^{km} H_{m+1}\left(l^{a}_{k}(\alpha_{1},\ldots,\alpha_{k}),\beta_{1},\ldots,\beta_{m}\right) +(-1)^k H_{k+1}\left(\alpha_{1},\ldots,\alpha_{k},l^{b}_{m}(\beta_{1},\ldots,\beta_{m})\right)\\  =  &l_{k+m}\left(H_{1}(\alpha_{1}),\ldots,H_{1}(\alpha_{k}),H_{1}(\beta_{1}),\ldots,H_{1}(\beta_{m})\right)\, ,\nonumber
\end{align}

\noindent
where in the l.h.s. only the summands corresponding to $i=k$ and $i=m$
appear
by  Rem. \ref{rem:onlyone}, and
for the r.h.s. we use Remark \ref{rem:signsLRHS} to determine the signs (a term involving $l_1$ does not appear, again due to Rem. \ref{rem:onlyone}).

From Def. \ref{def:multisymLinfinity}  it can be seen that equation \eqref{eq:conditionm1m1} is only non-trivial if\footnote{The fact that necessarily  $|\alpha|= 0$
was already used to determine the sign of the second term on the l.h.s. above.}
 $|\alpha| = |\beta| = 0$. Therefore, we will assume henceforth that this is the case. The two terms on the l.h.s. of equation \eqref{eq:conditionm1m1} can be written as follows:
\begin{eqnarray*}
H_{m+1}\left(l^{a}_{k}(\alpha_{1},\ldots,\alpha_{k}),\beta_{1},\ldots,\beta_{m}\right) = \frac{s^{a}_{m,2-k}}{2} \left[\alpha_{1},\ldots,\alpha_{k}\right]^{a}_{k}\wedge\left[\beta_{1},\ldots,\beta_{m}\right]^{b}_{m}\, , \\
H_{k+1}\left(\alpha_{1},\ldots,\alpha_{k},l^{b}_{m}(\beta_{1},\ldots,\beta_{m})\right) = \frac{s^{b}_{k,2-m}}{2} \left[\alpha_{1},\ldots,\alpha_{k}\right]^{a}_{k}\wedge\left[\beta_{1},\ldots,\beta_{m}\right]^{b}_{m}\, .
\end{eqnarray*}

\noindent
By Lemma \ref{lem:square}, the r.h.s. of equation \eqref{eq:conditionm1m1} can be written as

\begin{eqnarray*}
l_{k+m}\left(H_{1}(\alpha_{1}),\ldots,H_{1}(\alpha_{k}),H_{1}(\beta_{1}),\ldots,H_{1}(\beta_{m})\right) = -(-1)^{m(n_{a}+1)}\left[\alpha_{1},\ldots,\alpha_{k}\right]^{a}_{k}\wedge\left[\beta_{1},\ldots,\beta_{m}\right]^{b}_{m}\, .
\end{eqnarray*}

\noindent
From the last three equations
we obtain

\begin{equation*}
(-1)^{km}\frac{s^{a}_{m,2-k}}{2} +(-1)^k \frac{s^{b}_{k,2-m}}{2} = -(-1)^{m(n_{a}+1)}\, ,
\end{equation*}

\noindent
which is solved by  
\begin{equation*}
s^{a}_{m,i} = -(-1)^{m (n_{a}+i+1)} \, , \qquad s^{b}_{k,i} = -(-1)^{i(n_{a} {+}1)+k}\, ,\qquad m,k>1\,,\,\, i\leq 0\, .
\end{equation*}


\bigskip
 \noindent{\bf Cases $\mathbf{\left\{ k = 1, m > 1\right\}}$ and $\mathbf{\left\{ k > 1, m = 1\right\}}$.}

 Notice that this point we have already explicitly solved all the parameters $s^{a}_{m,i}$ and $s^{b}_{k,i}$ for all $k,m\geq 0$ and $i\leq 0$. Although this was obtained by separately analyzing different cases given by different values of $k$ and $m$, the result be summarized in a single formula, namely

\begin{equation}
\label{eq:sformula}
s^{a}_{m,i} = -(-1)^{m (n_{a}+i+1)} \, , \qquad s^{b}_{k,i} = -(-1)^{i(n_{a} {+}1)+k}\, , \qquad m,k\geq 0\, ,\,\,i\leq 0\, .
\end{equation}

\noindent
However, there remain two cases to be solved, namely $\left\{ k = 1, m > 1\right\}$ and $\left\{ k > 1, m = 1\right\}$. Notice that we do not have any parameter left to be fixed, so checking those cases is really a constraint.

We consider first the case $\left\{ k = 1, m > 1\right\}$. At first, we also assume $m>2$.
The condition \eqref{eq:Linfinityconditions} evaluated on $(\alpha,\beta_{1},\ldots,\beta_{m})$ reads
\begin{align}\label{eq:long}
&(-1)^m H_{m+1}\left(l^{a}_{1}(\alpha),\beta_{1},\ldots,\beta_{m}\right) 
+ \sum_{1\le p< q\le m} (-1)^{p+q}H_{m}\left(\alpha, l^{b}_{2}(\beta_p,\beta_q), \beta_1,\ldots,
\widehat{\beta_p},\dots,\widehat{\beta_q},\dots
\beta_{m})\right)
\\ 
&+ H_2((l^{b}_{m}(\beta_{1},\ldots,\beta_{m}),\alpha)\nonumber\\
  = \;\; & l_{m+1}\left(H_{1}(\alpha),H_{1}(\beta_{1}),\ldots,H_{1}(\beta_{m})\right)
+l_1(H_{m+1}(\alpha,\beta_{1},\ldots,\beta_{m}))
\, .\nonumber
\end{align}
(On the l.h.s. the first term corresponds to $i=1$ in eq. \eqref{eq:Linfinityconditions},
 the second to $i=2$, and the third to $i=m$; not other values of $i$ contribute by Remark \ref{rem:onlyone}. On the r.h.s. only the terms corresponding to $l_{m+1}$ and $l_1$ appear since   
the multibrackets of $L_{\infty}(M_a\times M_b)$ with two or more entries vanish  unless all the entries have degree zero, and the signs are given by Remark \ref{rem:signsLRHS}.) We may assume\footnote{This assumption was already used to determine the sign of the second term on the l.h.s. above.}   $|\beta_1|=\dots=|\beta_m|=0$, for otherwise both sides of the above equation vanish by
Remark \ref{rem:onlyone}.

The first term on the l.h.s. of eq. \eqref{eq:long} reads 
\begin{equation}\label{eqone}
(-1)^m \frac{s^a_{m,|\alpha|+1}}{2} l^{a}_{1}(\alpha)\wedge[\beta_{1},\ldots,\beta_{m}].
\end{equation}

The second term on the l.h.s.  
equals 
\begin{equation}\label{eqtwo}
\frac{s_{m-1,|\alpha|}^a}{2}\alpha \wedge d[\beta_1,\dots,\beta_m].
\end{equation}
 To see this,
 we use  the computation 
\begin{align}\label{eq:computationChris}
&\sum_{1\le p<q\le m}(-1)^{p+q}[l_2^b(\beta_p,\beta_q),\beta_1,\dots,\widehat{\beta_p}, \dots,\widehat{\beta_q},
\dots,{\beta_m}]\\
=&
\vs(m-1) \sum_{1\le p<q\le m} (-1)^{p+q}\iota(X_{l_2^b(\beta_p,\beta_q)}\wedge X_{\beta_1}\wedge\dots\wedge\widehat{X_{\beta_p}}\wedge \dots\wedge \widehat{X_{\beta_q}}\wedge \dots\wedge{X_{\beta_m}})\omega_b \nonumber\\
=& \vs(m-1)(-1)^{m} d\iota(X_{\beta_1}\wedge\dots\wedge X_{\beta_m})\omega_b \nonumber\\
=& d[\beta_1,\dots,\beta_m].\nonumber
\end{align}
where we used \cite[Lemma 9.2]{FRZ} in the second equality and  $\vs(m-1)\vs(m)=(-1)^m$.

The third term on the l.h.s. reads 
\begin{equation}\label{eqtwobis}
-  \frac{s^b_{1,2-m}}{2}[\alpha]\wedge[\beta_1,\dots,\beta_m]
-  \frac{s^a_{1,|\alpha|}}{2}\alpha\wedge[l_m^b(\beta_1,\dots,\beta_m)],
\end{equation}
where the second summand vanishes because of the assumption $m>2$.

The first term on the r.h.s. of eq. \eqref{eq:long}, using Lemma \ref{lem:square} and $-s^{A}_{0,0}=-s^{b}_{0,0}=1$, equals  \begin{equation}\label{eqthree}
-(-1)^{m(n_{a}+1)} 
[\alpha]\wedge[\beta_1,\dots,\beta_m].
\end{equation}

The last term on the r.h.s. is
\begin{equation}\label{eqfour}
\frac{s^a_{m,|\alpha|}}{2} l_1(\alpha\wedge[\beta_1,\dots,\beta_m])=
\frac{s^a_{m,|\alpha|}}{2}  \left(d\alpha\wedge[\beta_1,\dots,\beta_m]
+(-1)^{n_a-1-|\alpha|}\alpha\wedge d[\beta_1,\dots,\beta_m]\right).
\end{equation}

The term in  \eqref{eqtwo} cancels out with the second summand in 
  \eqref{eqfour}. Further,  using
that $l_1\alpha-[\alpha]=d\alpha$ by Remark  \ref{prop:iotaomega}
and the fact that $[\alpha]$ vanishes if $|\alpha|\neq 0$, one check that
the term \eqref{eqone} minus one half the term \eqref{eqthree} 
equals
$\frac{s^a_{m,|\alpha|}}{2}  d\alpha\wedge[\beta_1,\dots,\beta_m]$,  which is exactly the first summand in 
eq. \eqref{eqfour}. Finally, the term \eqref{eqtwobis} cancels out with one half the term \eqref{eqthree}.

Now, if $k=1,m=2$, then the term \eqref{eqtwo} is omitted (because the summand $i=2$ on the l.h.s. of
condition \eqref{eq:Linfinityconditions} is already given by the term \eqref{eqtwobis}),
and in \eqref{eqtwobis} the second summand no longer vanishes. We conclude that the case $\left\{ k = 1, m > 1\right\}$ 
 indeed works out with the choice of parameters given in \eqref{eq:sformula}.

One check in a similar way that the same holds for the case $\left\{ k > 1, m = 1\right\}$.
  This concludes the proof that $H$, as defined in the statement of the theorem,
  is
  an honest $L_{\infty}$-morphism.
\end{proof}


\section{A curved $L_{\infty}$-algebra associated to a closed differential form}\label{section:curved}


Let $(M,\omega)$ be a pre-$n$-plectic manifold. We saw  that one can associate to it
 an $L_{\infty}$-algebra $L_{\infty}(M,\omega)$, whose definition  we recalled in Def. \ref{def:multisymLinfinity}. For $k\ge 2$, the $k$-th multibracket of $L_{\infty}(M,\omega)$ is essentially given by contracting with $\omega$ the Hamiltonian vector fields of $k$ Hamiltonian forms, while the unary bracket is defined differently, as the de Rham differential. On the other hand, contracting with $\omega$ the Hamiltonian vector fields of an \emph{arbitrary number} of Hamiltonian forms is a natural operation, which we introduced in Def. \ref{def:square} using the notation $[\dots]$, and which proved to be necessary to describe the $L_{\infty}$-embedding obtained in Thm. \ref{thm:embed}. In this section we show that  the operation $[\dots]$ can be extended to a \emph{curved}  $L_{\infty}$-algebra structure canonically  associated to $(M,\omega)$, whose ``curvature'' is $-\omega$. 

\begin{defi} 
A {\bf curved $L_{\infty}$-algebra} is a $\ZZ$-graded vector space $W$ equipped with a collection ($k\ge0$) of linear maps $l_k \colon \otimes^kW\longrightarrow W$  of degree $2-k$, graded antisymmetric, and   satisfying for every  $m\ge 0$ and for every collection of homogeneous elements $w_1, \dots, w_m \in W$ the following relations: 
\begin{equation}\label{eq:jac}
\sum_{\substack{i+j=m+1\\i\ge 0,j\ge 1}}(-1)^{i(j-1)}\sum_{\sigma\in S_{i,m-i}} (-1)^{\sigma}\epsilon(\sigma)
l_j(l_i(w_{\sigma(1)}, \dots,w_{\sigma(i)}),w_{\sigma(i+1)}, \dots, w_{\sigma(m)} )=0.
\end{equation}
\end{defi}
 
\begin{remark}
The zero-th bracket   $l_0\colon \RR\to W$ has degree $2$, and is determined by the element $l_0(1)\in W_2$,  which we refer to as the ``curvature''. Writing   $D:=l_1$, the relations \eqref{eq:jac} for $m=0$ and $m=1$ read as follows: $D(l_0(1))=0$, i.e. $l_0(1)$ is a $D$-closed element, and
$D^2(x)+l_2(l_0(1),x)=0$ for all $x\in W$, so   $D$ does not square to zero in general.
\end{remark}

\begin{prop}\label{prop:curved}
Let $\omega\in \Omega^{n+1}(M)$ be a pre-$n$-plectic form. 
Consider the graded vector space whose non-trivial components are
\[
C_{i} =
\begin{cases}
\langle\omega\rangle & \hbox{ for } i=2,\\
\Omega^n(M) & \hbox{ for } i=1,\\
\ham{n-1}{M} & \hbox{ for } i=0,\\
\Omega^{n-1+i}(M) & \hbox{ for }  1-n \leq i \leq -1, 
\end{cases}
\]
where $\langle\omega\rangle$ denotes the one-dimensional real vector space generated by $\omega$. We define multilinear maps $[\dots]_k\colon C^{\otimes k}\to\Omega^{n+1-k}(M)$ as follows:
\begin{itemize}
\item for $k\ge 0$ and $\alpha_1,\dots,\alpha_k$ of  degree zero:
 \begin{equation*}
[\alphadk{k}]_k=\vs(k) \iota(\vk{k}) \omega
\end{equation*}
\item for $k\ge 2$ and   $\alpha_1,\dots,\alpha_k$ of  degree zero: $$[\alpha_1,\dots,\alpha_i,-\omega,\alpha_{i+1},\dots,\alpha_k]_{k+1}=(-1)^i d[\alpha_1,\dots,\alpha_k]_k,$$
\end{itemize}
where $d$ denotes the de Rham differential.

Then $C$, together with the above collection of multibrackets
(all other multibrackets are declared to  vanish) 
is a curved $L_{\infty}$-algebra. We denote it
$L^{curv}_{\infty}(M,\omega).$

\end{prop}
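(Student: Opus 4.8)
The plan is to verify directly that the collection of multibrackets $\{[\dots]_k\}$ on $C$ satisfies the curved Jacobi relations \eqref{eq:jac} for every $m\ge 0$. The key observation is that the brackets come in two flavours: the ``contraction'' brackets that eat only degree-zero elements (the $[\dots]_k$ of Def.~\ref{def:square}, now allowed for all $k\ge 0$, with $[1]_0=-\omega$), and the ``differential'' brackets $[\,\cdot,\dots,-\omega,\dots,\cdot\,]_{k+1}$ that insert a de Rham differential when exactly one entry is the curvature generator $-\omega$. Because $[\dots]_k$ vanishes whenever any entry has strictly negative degree, and because its output lands in plain differential forms (degree $\le 1$ as elements of $C$), in any nested composition $l_j(l_i(\dots),\dots)$ at most one bracket can be of ``differential'' type and the inner output can only be fed into a ``differential'' bracket or into $l_1=d$. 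This drastically limits which terms in \eqref{eq:jac} are nonzero.

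First I would dispose of the low cases by hand, as in the Remark: $m=0$ gives $D(l_0(1))=d(-\omega)=0$ since $\omega$ is closed; $m=1$ gives $D^2x+l_2(l_0(1),x)=0$, which for $x=\alpha$ of degree zero reads $d(-d\alpha\text{-type term})$... more precisely $D$ on $C_0$ is $[\alpha]_1=-d\alpha$ (wait—here $l_1$ on degree-zero elements must be read off: by the second bullet with the convention, $[{-\omega},\alpha]_2$ type terms; in fact one checks $l_1$ restricted to $C_{\le 0}$ is the de Rham differential and $l_1$ on $C_1=\Omega^n(M)$ is again $d$, landing in $C_2$ only on the line $\langle\omega\rangle$), and the relation $D^2+l_2(-\omega,\cdot)=0$ is exactly the defining identity $[{-\omega},\alpha]_2=-d[\alpha]_1$, up to the sign bookkeeping. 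Then for $m\ge 2$ I split into two cases: (a) all $m$ inputs have degree zero, and (b) exactly one input is the curvature $-\omega$ (if two or more inputs are $-\omega$, or any input has negative degree, both every nonzero term needs all-degree-zero entries in the contraction brackets, so one checks everything vanishes).

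In case (a), the terms of \eqref{eq:jac} with $i\ge 2$ are precisely the terms that appear in the $L_\infty$ relation for $L_\infty(M,\omega)$ \emph{except} that we have replaced $\{\dots\}_k$ by $[\dots]_k$ for $k\ge 1$ and added an $i=0,j=m+1$ term $l_{m+1}(l_0(1),w_1,\dots,w_m)=[{-\omega},w_1,\dots,w_m]_{m+1}=(-1)^0 d[w_1,\dots,w_m]_m$, plus the difference between $\{\cdot\}_1=d$ and $[\cdot]_1$. Since $[\dots]_k=\{\dots\}_k$ for $k\ge 2$ and $[\,\cdot\,]_1=\{\cdot\}_1-d$ on degree zero (Remark~\ref{prop:iotaomega}), the already-known $L_\infty$ identity for $L_\infty(M,\omega)$ reorganizes into \eqref{eq:jac} once the extra $i=0$ term is shown to cancel the discrepancy coming from swapping $d$ for $[\,\cdot\,]_1$; this is the only genuine computation and amounts to a sign-tracking exercise using $\vs(m-1)\vs(m)=(-1)^m$ and Lemma~9.2 of \cite{FRZ}, exactly as in the computation \eqref{eq:computationChris} already carried out in the proof of Thm.~\ref{thm:embed}. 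In case (b), say $w_{i_0}=-\omega$: a nonzero term of \eqref{eq:jac} either has the inner bracket $l_i$ not containing $-\omega$ (so $l_i=[\dots]_i$ on degree-zero elements and the outer $l_j$ is a ``differential'' bracket containing both $-\omega$ and the output $l_i(\dots)$—but the output is a form, not a degree-zero element, so this outer bracket is only defined when... it is $l_1=d$ composed with the differential insertion), or the inner bracket contains $-\omega$ (so $l_i$ is itself a ``differential'' bracket $=(-1)^? d[\dots]$, and the outer $l_j$ then acts on a form). Untangling these using $d^2=0$ and the definition $[\dots,-\omega,\dots]_{k+1}=\pm d[\dots]_k$ collapses \eqref{eq:jac} to the statement $d^2[w_1,\dots,\widehat{-\omega},\dots,w_m]_{m-1}=0$ together with the all-degree-zero identity from case (a) applied to the $m-1$ remaining inputs.

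The main obstacle I anticipate is the sign bookkeeping in case (a): correctly matching the Koszul signs of the curved relation \eqref{eq:jac} (which sums over $i\ge 0$ with the factor $(-1)^{i(j-1)}$) against the ordinary $L_\infty$ relation for $L_\infty(M,\omega)$ after the substitutions $\{\dots\}_1\rightsquigarrow [\dots]_1+d$ and the insertion of the curvature term, and checking that the shift of the grading (the generator $\omega$ now sits in degree $2$, and $C$ has the extra pieces $C_1,C_2$) does not alter any sign in the relations that only involve degree-$\le 0$ inputs. I would organize this by first proving the purely ``differential'' relations (case (b)) which are formal consequences of $d^2=0$ and the bracket definitions, and only then do the one substantial sign computation in case (a), leaning on the already-verified identity \eqref{eq:computationChris} so as not to repeat it.
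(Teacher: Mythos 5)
Your overall plan (verify \eqref{eq:jac} directly, split into the all-degree-zero case and the case with exactly one curvature entry, lean on the computation \eqref{eq:computationChris}) is the same as the paper's, but as written it has two genuine problems. First, you have misread the unary bracket. By the definition (``all other multibrackets are declared to vanish''), $[\,\cdot\,]_1$ equals $-d$ on $C_0=\ham{n-1}{M}$ and is zero on $C_i$ for $i<0$ and for $i=1,2$; it is not the de Rham differential on $C_{\le 0}$, and it is certainly not ``$d$ landing in $C_2$'' on $C_1$ (the image of $d\colon\Omega^n(M)\to\Omega^{n+1}(M)$ is not contained in $\langle\omega\rangle$, so that does not even define a map to $C_2$). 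Likewise there is no ``defining identity $[-\omega,\alpha]_2=-d[\alpha]_1$'': the second bullet only produces brackets of arity $\ge 3$, so $[-\omega,\alpha]_2$ vanishes by declaration and the $m=1$ relation holds because both $D^2\alpha$ and $l_2(-\omega,\alpha)$ are zero. This is not cosmetic: the vanishing of $D$ in negative degrees is precisely what makes the $(i,j)=(m,1)$ term of \eqref{eq:jac} drop out, so that for $m\ge 3$ the only surviving contributions are the $i=2$ term, equal to $-d[\alpha_1,\dots,\alpha_m]$ by \eqref{eq:computationChris}, and the curvature term $[-\omega,\alpha_1,\dots,\alpha_m]_{m+1}=d[\alpha_1,\dots,\alpha_m]$, which cancel. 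With your reading of $l_1$ as $d$ in negative degrees the identity would actually fail (an uncancelled $d[\alpha_1,\dots,\alpha_m]$ survives), and it contradicts your own later use of $[\,\cdot\,]_1=\{\cdot\}_1-d$; note also that for $m\ge 3$ the relevant discrepancy between the curved and uncurved unary brackets occurs on the negative-degree element $l_m(\alpha_1,\dots,\alpha_m)$, not ``on degree zero''.

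Second, in your case (b) the ``untangling using $d^2=0$'' does not dispose of the terms in which the inner bracket contains $-\omega$ and its output is fed into an outer \emph{contraction} bracket with $j\ge 2$, e.g. $[\,[-\omega,\alpha_1,\alpha_2,\alpha_3]_4,\alpha_4,\dots]_j$. Here the inner output $d[\alpha_1,\alpha_2,\alpha_3]_3$ is a degree-zero element of $C$ (an exact, hence Hamiltonian, $(n-1)$-form), so the outer bracket is a contraction and its vanishing is not formal: one must use that an exact form admits the zero Hamiltonian vector field and that the contraction brackets do not depend on this choice even when $\omega$ is degenerate. This is exactly eq. \eqref{eq:omegaa} in the paper, and it is needed before the remaining $i=2$ term can be rewritten, via \eqref{eq:chris15}, as $\pm d\bigl(d[\alpha_1,\dots,\alpha_{m-1}]\bigr)=0$. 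Once these two points are repaired, your case (a) reorganization --- deducing the all-degree-zero relations from the known $L_{\infty}$-identities of $L_{\infty}(M,\omega)$ by trading the uncurved $j=1$ term for the new $i=0$ curvature term --- is a legitimate variant of the paper's term-by-term computation, and the rest of your outline goes through.
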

  
\begin{remark}
In particular, the ``curvature'' is  $[1]_0=-\omega$ (where $1\in C^{\otimes 0}=\RR$).
 The differential $D:=[\;]_1$ gives rise to the following chain complex: 
$$ C^{\infty}(M)\overset{0}{\to}\dots  \overset{0}{\to}\ham{n-1}{M}\overset{-d}{\to}\Omega^n(M)\overset{0}{\to}\langle\omega\rangle.$$
 \end{remark}  
  
\begin{remark}\label{rem:02}
 Notice that for $k\ge 1$, the multibrackets $[\dots]_k$ vanish, except possibly in two cases: all entries of   have degree zero, or all entries of   have degree zero except for one, which has degree 2 (and hence is a multiple of $-\omega$).
\end{remark}
\begin{proof}
For all $k\ge 0$, the multibracket $[\dots]_k$ is graded skew-symmetric and of degree $2-k$. 

We need to check that the generalized Jacobi identities \eqref{eq:jac}  are satisfied, for all $m\ge 0$. For $m=0$ we have $D(-\omega)=0$ by degree reasons. So in the following we take  $m\ge 1$ and homogeneous elements $w_1, \dots, w_m \in C$. 
We argue similarly to Rogers' \cite[Proof of Thm. 5.2]{RogersL}.
By Rem. \ref{rem:02}, we can assume that all $w_i$ have degree zero or $2$.
Further, since $C_2=\langle\omega\rangle$ is one-dimensional and the l.h.s. of eq. \eqref{eq:jac} is graded skew-symmetric by construction, we may assume that at most one $w_i$ has degree $2$. Hence we just need to consider two cases.
\bigskip

\noindent{\bf Case 1: all $w_1, \dots, w_m$ have degree zero.}

Let $\alpha_1,\dots,\alpha_m\in C$ be elements of degree zero.

For $m=1$ we have $[-\omega, \alpha_1]+D(D(\alpha_1)) =0$, since both terms vanish.

For $m=2$ we have $[-\omega, \alpha_1,\alpha_2]
-([D\alpha_1,\alpha_2]-[D\alpha_2,\alpha_1])+D[\alpha_1,\alpha_2]=0$:
the first and last term cancel out, while the two middle terms vanish.

Now we assume that $m\ge 3$, and consider the various summands of the sum $\sum_{\substack{i+j=m+1}}$  on the l.h.s. of eq. \eqref{eq:jac}.

\begin{itemize}
\item For $j=1$ (so $i=m$): the  corresponding summand vanishes, since   $D[\alpha_1,\dots,\alpha_m]=0$, for $D$ vanishes in negative degrees.

\item For $j=2,\dots,m-2$ (so $i=m-1,\dots,3$): the corresponding summand  vanishes by degree reasons (see Rem. \ref{rem:02}), since
for $i\ge 3$ the  bracket $[\dots]_i$ takes elements of degree zero to   elements of negative degree.

\item For $j=m$ (so $i=1$):  the corresponding summand  vanishes by degree reasons  (see Rem. \ref{rem:02}), since $D$ has degree one.

\item For $j=m-1$ (so $i=2$): the corresponding summand     is 
\begin{equation}
\label{eq:chris15}
\sum_{\sigma\in S_{2,m-2}}(-1)^{\sigma}\epsilon(\sigma) [[\alpha_{\sigma(1)},\alpha_{\sigma(2)}],\alpha_{\sigma(3)}, \dots, \alpha_{\sigma(m)}]_{m-1}
=-d[\alpha_1,\dots,\alpha_m].
\end{equation}
The above equality is obtained from the computation \eqref{eq:computationChris}, recalling 
Remark \ref{prop:iotaomega}.

\item For $j=m+1$ (so $i=0$): the  corresponding summand is
$$[-\omega,\alpha_1,\dots,\alpha_m]=d[\alpha_1,\dots,\alpha_m]$$
and cancels out with the summand given by $j=m-1$.
\end{itemize}

\bigskip
\noindent{\bf Case 2: all $w_1, \dots, w_m$ have degree zero except for one, which has degree 2.} 

Fix $m \ge 1$, and let   $\alpha_1,\dots,\alpha_{m-1}\in C$ be elements of degree zero.
We consider the various summands of the sum $\sum_{\substack{i+j=m+1}}$  on the l.h.s. of eq. \eqref{eq:jac}, applied to  $\alpha_1,\dots,\alpha_{m-1},-\omega$.
For $i=0$, the corresponding summand vanishes, because $-\omega$   appears twice as an entry of the bracket $l_j$.

For all $i\ge 1$,
\begin{equation}\label{eq:omegaa}
[[-\omega,\alpha_1,\dots,\alpha_{i-1}]_i,\alpha_i,\dots,\alpha_{m-1}]_j=0.
\end{equation}
Indeed, by Remark \ref{rem:02}, we may assume that  
the inner bracket has degree zero or $2$, and in those cases it reads respectively
$[-\omega,\alpha_1,\alpha_2,\alpha_3]_4$ (which is an exact form, so   its Hamiltonian vector field vanishes) and 
$[-\omega,\alpha]_2 $ (which vanishes).
Further,  except in the case  $i=2$,
$$[[\alpha_1,\dots,\alpha_{i}]_i,\alpha_{i+1},\dots,\alpha_{m-1},-\omega]_j=0$$
 by degree reasons (again by Rem. \ref{rem:02}).

Hence we need to consider only   the summand on the l.h.s. of \eqref{eq:jac} corresponding to  $i=2$ (so $j=m-1$, and  necessarily  $m\ge 2$).
It is a sum over unshuffles $S_{2,m-2}$, however due to eq. \eqref{eq:omegaa} it reduces to
 \begin{align*}
&\sum_{\sigma\in S_{2,m-3}}(-1)^{\sigma}\epsilon(\sigma)[[\alpha_{\sigma(1)},\alpha_{\sigma(2)}],\alpha_{\sigma(3)}, \dots, \alpha_{\sigma(m-1)}, -\omega]_{m-1}\\
=&\sum_{\sigma\in S_{2,m-3}}(-1)^{\sigma}\epsilon(\sigma)(-1)^m d[[\alpha_{\sigma(1)},\alpha_{\sigma(2)}],\alpha_{\sigma(3)}, \dots, \alpha_{\sigma(m-1)}]_{m-2}\\
=&-(-1)^md\Big(d[\alpha_1,\dots,\alpha_{m-1}]\Big)=0.
\end{align*}
Notice that the second equality   is just eq. \eqref
{eq:chris15}.
%
%
\end{proof}

In conclusion, a pre-$n$-plectic form $\omega$ on $M$ gives rise to both 
the $L_{\infty}$-algebra $L_{\infty}(M,\omega)$ of Def. \ref{def:multisymLinfinity}
and the curved $L_{\infty}$-algebra
$L^{curv}_{\infty}(M,\omega)$ of Prop. \ref{prop:curved}. The underlying graded vector spaces are the same in degrees $\le 0$, but the one of $L^{curv}_{\infty}(M,\omega)$  also has components in degrees $1$ and $2$. Their higher brackets are
almost identical, but the underlying chain complexes are very different 
 and certainly not quasi-isomorphic. 

\begin{remark}
The relation between $L_{\infty}(M,\omega)$ and $L^{curv}_{\infty}(M,\omega)$ is not clear at this stage.
One can regard both  as curved $L_{\infty}$-algebras, and ask if there is a natural   morphism of curved $L_{\infty}$-algebras (see \cite[Def. 6]{KajSta}) between them. 
 
In the simplest case in which $\omega\in \Omega^2(M)$ is a symplectic form, we have that 
$L_{\infty}(M,\omega)=C^{\infty}(M)$ is a Lie algebra while $L^{curv}_{\infty}(M,\omega)=C^{\infty}(M)\oplus \ham{1}{M}\oplus\langle\omega\rangle $ is an $L_{\infty}$-algebra concentrated in degrees $0,1,2$.
There is no  morphism $g\colon L_{\infty}(M,\omega)\to L^{curv}_{\infty}(M,\omega)$: the first condition {such morphism} would have to fulfil is an equality of certain maps from $\RR$ to the degree two component of $L^{curv}_{\infty}(M,\omega)$, and this condition fails since $\omega\neq 0$.
In the opposite direction, 
there is   a strict morphism   $f\colon L^{curv}_{\infty}(M,\omega)\to L_{\infty}(M,\omega)$, which is zero except for the restriction of the unary component $f_1$ to degree zero elements, which reads $f_1|_{C^{\infty}(M)}=Id_{C^{\infty}(M)}$.  For an arbitrary pre-$n$-plectic form $\omega$, one can check that there exists no strict morphism   $f\colon L^{curv}_{\infty}(M,\omega)\to L_{\infty}(M,\omega)$ such that $f_1$ is the identity in degrees $\le 0$, and we do not know if there is {a} non-strict one with this property. 
\end{remark}
  
\bibliographystyle{habbrv} 
\bibliography{Product}
\bigskip
\end{document}